\numberwithin{equation}{section}
\newtheorem{theorem}{Theorem}[section]
\newtheorem{lemma}[theorem]{Lemma}
\newtheorem{remark}[theorem]{Remark}
\newtheorem{corollary}[theorem]{Corollary}
\newtheorem{proposition}[theorem]{Proposition}
\newtheorem{definition}[theorem]{Definition}
\newtheorem{example}[theorem]{Example}
\def\eqref#1{(\ref{#1})}
\newenvironment{acknowledgment}{\smallskip{\sc Acknowledgments.}\rm}{\smallskip}
\newenvironment{notation}{\smallskip{\sc Notation.}\rm}{\smallskip}
\def\RM{\rm}
\def\func#1{\mathop{\mathrm{#1}}\nolimits}
\def\enddoc{\end{document}}
\def\FRAME#1#2#3#4#5#6#7#8
\begin{document}
\title[Parabolic equation]{On nonexistence and existence of positive global solutions to heat equation with a potential term on Riemannian manifolds}

\author[Gu]{Qingsong Gu}
\address{Department of Mathematics and Statistics, Memorial University of Newfoundland, A1C 5S7, NL, Canada.}
\email{001gqs@163.com}

\author[Sun]{Yuhua Sun}
\address{School of Mathematical Sciences and LPMC, Nankai University, 300071
Tianjin, P. R. China}
\email{sunyuhua@nankai.edu.cn}

\author[Xu]{Fanheng Xu}
\address{School of Mathematical Sciences and LPMC, Nankai University, 300071
Tianjin, P. R. China}
\email{xufanheng@mail.nankai.edu.cn}

\date{January 05, 2019}
\keywords{heat equation with potential term;
Riemannian manifolds; sharp volume growth}
\subjclass{Primary: 35J61, Secondary: 58J05}
\thanks{\noindent
Sun was supported by the National Natural Science Foundation of China (No.11501303, No.11871296,
No.11761131002),
 and
also by the Fundamental Research Funds for the Central Universities.
}

\begin{abstract}
We reinvestigate nonexistence and existence of global positive solutions to heat equation with
a potential term
on Riemannian manifolds.  Especially, we give a very natural sharp condition only in terms of the volume of geodesic ball to obtain nonexistence results.
\end{abstract}
 \maketitle

\section{Introduction}

In this paper we investigate nonexistence and existence of global positive solutions to the
following problem
\begin{equation}\label{eq-pra}
\left\{
\begin{array}{ll}
{{\partial_t u} = \Delta u -V(x)u+ {u^p}} \quad \mbox{in $M \times (0, \infty )$}, \\
{u(x,0)= {u_0(x)}} \quad  \mbox{in $M$},
\end{array}
\right.
\end{equation}
where $p>1$, and $M$ is
 a connected non-compact
geodesically complete Riemannian manifold with $dimM\geq3$,
$\Delta $ is the Laplace-Beltrami operator on $M$, $V(x)$ is a smooth  function and can be allowed to
be negative,
and ${u_0}$ is a nonnegative function which is not identically zero.

The main objective of this paper is to illustrate the following questions:
\begin{enumerate}
\item[1.]{What are the influences of potential $V$ and $p$ on the
nonexistence and existence of global positive solutions to problem (\ref{eq-pra})?}
\item[2.]{ Are these influences of $p$ sharp in some kind of sense
for different potential $V$?}
\end{enumerate}

Before answering these questions, let us firstly recall some history in this area.
When $M=\mathbb{R}^N$, problem (\ref{eq-pra}) and its variations have been investigated widely in different respects, see \cite{Ban-Lev, Ban-Tesei11, Gal-Lev98, Weissler80, Weissler81}, and also a very good survey paper by Levine \cite{Lev90}.

Among these literatures,
the first celebrated result on problem (\ref{eq-pra}) is due to Fujita's famous paper \cite{Fujita} dealing
with the case when $M=\mathbb{R}^N$ and $V(x)\equiv0$.
He proved that
\begin{enumerate}
\item[(1)]{If $1<p<1+\frac{2}{N}$, and $u_0>0$, then (\ref{eq-pra}) possesses no global positive solution.}
\item[(2)]{If $p>1+\frac{2}{N}$, and $u_0$
is smaller than a small Gaussian, then (\ref{eq-pra}) has global solutions.}
\end{enumerate}
Here the number $1+\frac{2}{N}$ is called the Fujita exponent, and usually denoted
by $p^{*}$. The question of whether $p^{*}=1+\frac{2}{N}$
 belongs to the blow-up case is much more difficult.
 The case $p=1+\frac{2}{N}$ was decided by Hayakawa \cite{H} for $N=1,2$ and by Kobayashi, Sirao and Tanaka \cite{KST} for general $N$. One can also see the papers
 \cite{AW},\cite{Weissler81} for different methods and further developments.

 Zhang investigated problem (\ref{eq-pra}) when
$V(x)$ has the asymptotic behavior like
$\frac{\omega}{1+|x|^b}$
for some $\omega\neq0$ and $b>0$.  He showed that
\begin{theorem}\label{thm-Zhang}\cite[Zhang]{Zhang01}\RM\;
Let $M=\mathbb{R}^N$ with $N\geq3$.
\begin{enumerate}
\item[(1)]{If, for some $b>2$ and $\omega>0$, $0\leq V(x)\leq \frac{\omega}{1+|x|^b}$ holds, then
$p^{*}=1+\frac{2}{N}$;}
\item[(2)]{If, for some $b\in(0,2)$ and $\omega>0$, $V(x)\geq\frac{\omega}{1+|x|^b}$ holds, then $p^{*}=1$ and there exists global solutions for all $p>1$;}
\item[(3)]{If, for some $b>2$ and $\omega<0$ with
$|\omega|$ small enough, $\frac{\omega}{1+|x|^b}\leq V(x)\leq0$ holds, then $p^{*}=1+\frac{2}{N}$; }
\item[(4)]{If, for some $b\in (0, 2)$ and $\omega<0$, $V(x)\leq \frac{\omega}{1+|x|^b}$ holds,
then $p^{*}=\infty$, which means there exist no
global solutions to (\ref{eq-pra}) for any $p>1$.}
\end{enumerate}
\end{theorem}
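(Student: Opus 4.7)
The plan is to split each of the four cases into two subproblems: the \emph{nonexistence} half (showing there are no global positive solutions for $1 < p \le p^{*}$) and the \emph{existence} half (producing global solutions for $p > p^{*}$). The nonexistence halves will be handled by a Mitidieri--Pokhozhaev-style test function method, while the existence halves will be handled by constructing explicit supersolutions and invoking a comparison principle. In each case the role of the potential $V$ in both arguments must be controlled explicitly.

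For the short-range cases (1) and (3), where $V = O(|x|^{-b})$ with $b>2$, the strategy is to treat $V$ as a perturbation of the free case. For nonexistence, I would multiply \eqref{eq-pra} by a rescaled test function $\varphi_{R}(x)\eta_{T}(t)$ with $T\sim R^{2}$, integrate by parts, and apply H\"older to reach an inequality of the shape
\begin{equation*}
\int_{\mathbb{R}^N} u_{0}(x)\,\varphi_{R}(x)\,dx \; \le \; C\, R^{N+2-\frac{2p}{p-1}} + \text{(contribution from $V$)}.
\end{equation*}
In case (1) the contribution from $V\ge 0$ drops out with a favorable sign, while in case (3) smallness of $|\omega|$ together with the $b>2$ decay lets me absorb it into the left-hand side. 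Sending $R\to\infty$ gives a contradiction exactly when $p \le 1+\frac{2}{N}$. For existence with $p>1+\frac{2}{N}$, case (1) is the easiest: since $V\ge 0$, any supersolution of the free problem is still a supersolution here, so the classical small-Gaussian supersolution of Fujita transplants directly. Case (3) is the hardest ingredient, since comparison with the free equation is lost; the plan is to build a modified Gaussian supersolution of the form $\varepsilon(1+t)^{-N/2}\exp(-\tfrac{|x|^{2}}{4(1+t)})\,h(x)$ where $h$ is a bounded perturbation coming from the eigenfunction-type solution of $-\Delta h + Vh = 0$ at infinity (this uses $b>2$ so that $V\in L^{N/2}$, allowing Kato-class perturbation arguments).

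For the long-range cases (2) and (4), where $V\sim\pm|x|^{-b}$ with $b<2$, the potential dominates and the answers are extreme. In case (2) I would exhibit a time-independent supersolution of the form $\overline{u}(x)=\varepsilon\,(1+|x|^{2})^{-\sigma}$ with $\sigma$ chosen so that $-\Delta\overline{u}+V\overline{u}-\overline{u}^{p}\ge 0$; since $V\ge c(1+|x|)^{-b}$ with $b<2$ and $\overline{u}$ decays polynomially, a direct computation shows the potential term beats the source term $\overline{u}^{p}$ at infinity for every $p>1$, provided $\varepsilon$ is small. In case (4) I would construct a subsolution $\underline{u}(x,t)=e^{\lambda t}\phi(x)$ of the \emph{linear} problem $\partial_{t}u=\Delta u-V(x)u$ by taking $\phi$ supported where $-V$ is largest; since the bottom of the spectrum of $-\Delta+V$ with $V\le -c|x|^{-b}$, $b<2$, is $-\infty$ (or at least negative without any mass gap in the right sense), one can make $\lambda>0$, and then comparison gives $u\ge\underline{u}\to\infty$, contradicting global existence for any $p>1$.

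The main obstacle I anticipate is the existence part of case (3): once the potential has the ``wrong'' sign the free-equation supersolution is no longer available, and one must verify that the Kato-class perturbation preserves the Gaussian decay needed for a global supersolution. A secondary technical point is keeping the test-function computation in case (3) uniform in $R$ despite the sign of $V$; this is where the hypothesis that $|\omega|$ be small enters in an essential way.
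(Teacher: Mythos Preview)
This theorem is not proved in the paper; it is quoted from Zhang \cite{Zhang01} as historical background, so there is no proof in the paper to compare your proposal against directly.

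That said, the paper does later recover the \emph{nonexistence} halves of parts (1) and (3) as applications of its own Theorem~\ref{thm:1} (see Section~\ref{example}), and the route taken there is genuinely different from yours. Rather than run the test-function argument directly on \eqref{eq-pra} and treat the $V$-term as a perturbation to be absorbed, the paper first performs a Doob $h$-transform: it produces a positive solution $h$ of $\Delta h=Vh$ with $h\asymp 1$ when $b>2$ (via Lemmas~\ref{h-exist} and~\ref{h-negative}), passes to the equivalent potential-free problem \eqref{eqT} on the weighted manifold $(M,h^{2}d\mu_{0})$, and then invokes a pure volume-growth criterion. A payoff of this detour is that in case~(3) the paper requires \emph{no} smallness of $|\omega|$, whereas your absorption argument does; the paper flags this explicitly as an improvement over Zhang's original statement. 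Your direct approach is closer in spirit to Zhang's own proof and is entirely reasonable, but the $h$-transform is what buys the removal of the smallness hypothesis.

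The paper does not touch cases~(2) and~(4), nor the existence halves of (1) and (3), so for those your sketch stands alone. It is broadly plausible, but two points would need real work: in case~(4) the claim that the bottom of the spectrum of $-\Delta+V$ is $-\infty$ for $V\le -c|x|^{-b}$ with $0<b<2$ is not quite right as stated (the operator can still be bounded below), so you would need a more careful construction of the growing subsolution; and in case~(3) the modified-Gaussian supersolution involving the $h$-factor is exactly the delicate step you identify, and would require a genuine heat-kernel or perturbation estimate rather than just $V\in L^{N/2}$.
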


When $V(x)$ behaves like $\frac{\omega}{|x|^2}$ for some $\omega>0$ and large $|x|$,  Ishige proved that
\begin{theorem}\label{thm-Ishige}\cite[Ishige]{Ishi08}\RM\;
Let $M=\mathbb{R}^N$ with $N\geq3$. Assume that $V(x)\geq0$. Let $\omega>0$.
\begin{enumerate}
\item[(1)]{If $V(x)\geq\frac{\omega}{|x|^2}$ for large $x$, then for $p>p^{*}(\omega)$, there exists global positive
solution to (\ref{eq-pra});}
\item[(2)]{If $V(x)\leq \frac{\omega}{|x|^2}$ for large $x$, then for $1<p\leq p^{*}(\omega)$, there exists no global positive solution to (\ref{eq-pra});}
\end{enumerate}
where
\begin{equation}
p^{*}(\omega)=1+\frac{2}{N+\alpha(\omega)},
\end{equation}
and
\begin{equation}\label{root-1}
\alpha(\omega)=\frac{-(N-2)+\sqrt{(N-2)^2+4\omega}}{2}
\end{equation}
is  the larger root of the equation $\alpha(\alpha+N-2)=\omega$.
\end{theorem}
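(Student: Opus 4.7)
The plan is to argue via sharp two-sided bounds for the heat kernel $p_V(t,x,y)$ of the Schr\"odinger semigroup $S_V(t)=e^{-t(-\Delta+V)}$ with $V(x)=\omega|x|^{-2}$, and then run a standard Fujita-type scheme. The kernel estimate I would aim for is
\[
p_V(t,x,y)\asymp t^{-N/2}\Bigl(\tfrac{\sqrt{t}}{|x|}\wedge 1\Bigr)^{\!\alpha(\omega)}\Bigl(\tfrac{\sqrt{t}}{|y|}\wedge 1\Bigr)^{\!\alpha(\omega)}\exp\!\Bigl(-\tfrac{|x-y|^2}{ct}\Bigr),
\]
most cleanly derived via a Doob-type transform using the positive $(-\Delta+V)$-harmonic function $h(x)=|x|^{\alpha(\omega)}$, whose exponent is exactly the larger root appearing in \eqref{root-1}. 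These bounds yield $\|S_V(t)\phi\|_\infty\lesssim(1+t)^{-(N+\alpha(\omega))/2}$ for nonnegative $\phi$ with sufficient decay at infinity; the extra $(\sqrt t/|x|\wedge 1)^{\alpha}$ factor, not the weighted volume growth $r^{N+2\alpha}$ alone, is what produces the shifted dimension $N+\alpha(\omega)$ rather than the naive $N+2\alpha(\omega)$.

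For part (1) I would take $u_0\geq 0$ small and appropriately decaying, and build a global mild solution as a fixed point, in the weighted Banach space
\[
X_a\defeq\Bigl\{u:\sup_{t>0}(1+t)^{a}\|u(\cdot,t)\|_\infty<\infty\Bigr\},\qquad a=\tfrac{N+\alpha(\omega)}{2},
\]
of the Duhamel map $\Phi[u](x,t)=S_V(t)u_0(x)+\int_0^tS_V(t-s)[u(\cdot,s)^p]\,ds$. The contraction estimate reduces to the classical Fujita convolution bound $\int_0^t(1+t-s)^{-a}(1+s)^{-pa}\,ds\lesssim(1+t)^{-a}$, which holds exactly when $pa>1$, i.e.\ $p>p^*(\omega)$.

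For part (2) I would argue by contradiction: given a positive global solution $u$, positivity of $S_V$ gives the Duhamel inequality $u(x,t)\geq\int_0^t S_V(t-s)[u(\cdot,s)^p]\,ds$. Inserting the heat-kernel lower bound on the region $|x|\lesssim\sqrt{t}$ and iterating with Jensen's inequality produces pointwise lower bounds $u(x,t)\geq c_k t^{\gamma_k}h(x)$ whose exponents satisfy the recursion $\gamma_{k+1}=p\gamma_k-a(p-1)+1$. In the strictly subcritical regime $p<p^*(\omega)$, i.e.\ $a(p-1)<1$, this drives $\gamma_k\to+\infty$ and contradicts finiteness of $u$. The endpoint $p=p^*(\omega)$ requires the logarithmic sharpening due to Hayakawa and Kobayashi--Sirao--Tanaka: a careful truncation of the Duhamel iteration over a space-time exhaustion that extracts a divergent logarithm at the critical balance.

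The hard part is the sharp two-sided heat-kernel estimate itself, and especially its lower bound near the singular set of $V$: matching the scaling factor $(\sqrt{t}/|x|\wedge 1)^{\alpha(\omega)}$ requires uniqueness of the minimal positive fundamental solution of $-\Delta+V$ together with a parabolic Harnack inequality for the Doob-transformed operator on the weighted space with measure $h^2\,dx$. Once such kernel bounds are in hand, both directions of Theorem \ref{thm-Ishige} follow from the standard semigroup arguments sketched above.
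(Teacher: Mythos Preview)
The paper does not give its own proof of Theorem~\ref{thm-Ishige}: it is quoted as a result of Ishige \cite{Ishi08}. What the paper \emph{does} do is recover the nonexistence half (part~(2)) as a corollary of its main Theorem~\ref{thm:1}; see item~(2) of the second example in Section~\ref{example}. There the argument runs through the same Doob $h$-transform you mention, with $h(x)\asymp|x|^{\alpha(\omega)}$ supplied by Lemma~\ref{h-est-IK}, but then diverges from your plan: instead of heat-kernel bounds and Duhamel iteration, the paper applies the test-function/nonlinear-capacity machinery of Section~\ref{sec-blow-up}, which needs only the volume growth $\nu(B(0,r))\asymp r^{N+\alpha(\omega)}$ of the weighted measure $d\nu=h\,dx$.

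Your approach is a legitimate and essentially correct alternative, closer in spirit to Ishige's original proof than to this paper. The two-sided kernel bound you state is known for the exact inverse-square potential, and once it is in hand both the contraction-mapping existence and the iterated-lower-bound blow-up arguments are standard. The trade-off is this: your route gives sharper pointwise information and handles existence and nonexistence in one analytic framework, but it is tied to $\mathbb{R}^N$ and to potentials for which such precise kernel asymptotics are available. The paper's test-function method is softer---it never touches the heat kernel of $-\Delta+V$---and therefore transfers to general manifolds under a volume hypothesis alone; the price is that existence (Theorem~\ref{thm:3}) has to be argued separately and under stronger assumptions. One small gap in your sketch: your iteration for part~(2) uses the lower heat-kernel bound on $\{|x|\lesssim\sqrt t\}$, but the initial seed $u(x,t)\geq c\,t^{\gamma_0}h(x)$ on that region must first be extracted from positivity of $u_0$ via the kernel lower bound, and you should say which $\gamma_0$ starts the recursion.
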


 When $V(x)$ behaves like $\frac{\omega}{|x|^2}$, and $\omega$ can be allowed to be negative satisfying $-\frac{(N-2)^2}{4}\leq\omega<0$, Pinsky obtained that
\begin{theorem}\label{thm-Pinsky}\cite[Pinsky]{Pinsky09}\RM\;
Let $M=\mathbb{R}^N$ with $N\geq3$.
\begin{enumerate}
\item[(1)]{If $V(x)\geq \frac{\omega}{|x|^2}$, then there exists global solution to (\ref{eq-pra})
when $p>p^{*}(\omega)$;}
\item[(2)]{If $V(x)\leq\frac{\omega}{|x|^2}$, for large $|x|$, then there are no global solutions
to (\ref{eq-pra}) when $1<p\leq p^{*}(\omega)$.}
\end{enumerate}
\end{theorem}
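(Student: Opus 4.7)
The plan is to handle the two parts separately, exploiting the scale-invariance of the model potential $\omega/|x|^2$ and the explicit generalized harmonic function $\phi(x) = |x|^{\alpha(\omega)}$, which satisfies $\Delta\phi = (\omega/|x|^2)\phi$.

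For part (1), the existence assertion, since $V \geq \omega/|x|^2$ any positive supersolution of the problem with the critical inverse-square potential is also a supersolution of (\ref{eq-pra}), so by the maximum principle it suffices to construct a bounded global supersolution of $\partial_t W = \Delta W - (\omega/|x|^2) W + W^p$. I would seek $W$ in self-similar form
\[
W(x,t) \;=\; (t+1)^{-1/(p-1)}\,F\!\left(\frac{|x|}{\sqrt{t+1}}\right),
\]
with $F$ bounded near $0$ and behaving like the harmonic profile $r^{\alpha(\omega)}$ at infinity. Substituting into the equation, matching scales, and analyzing the resulting radial ODE for $F$, the existence of a smooth positive bounded profile turns out to require $\tfrac{1}{p-1} < \tfrac{N+\alpha(\omega)}{2}$, i.e.\ $p > p^*(\omega)$. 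Choosing $u_0$ below a small multiple of $W(\cdot,0)$ then produces the desired global solution. Equivalently, one can set up a Picard iteration in a weighted $L^\infty$ space using the heat-kernel bound $p^{V}_t(x,y) \leq C\,t^{-(N+\alpha(\omega))/2}$ obtained from the ground-state transformation for the inverse-square operator, and check that the iteration converges precisely under $p > p^*(\omega)$.

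For part (2), the nonexistence assertion, I would run a weighted-integral (Kaplan--Fujita) argument with the test function $\phi$ above. The hypothesis $V \leq \omega/|x|^2$ gives $\Delta\phi - V\phi \geq 0$ pointwise. Assume, for contradiction, a global positive solution $u$, introduce a smooth spatial cutoff $\chi_R$ supported in $B_{2R}$ and equal to $1$ on $B_R$, and replace $\phi$ by a regularization $\phi_\varepsilon$ avoiding the singularity at the origin. Setting $J_R(t) := \int u(x,t)\,\phi_\varepsilon(x)\,\chi_R(x)\,dx$ and integrating by parts twice in space, the interior contribution $\int u\,\chi_R(\Delta\phi_\varepsilon - V\phi_\varepsilon)$ is nonnegative, yielding
\[
J_R'(t) \;\geq\; \int u^p\,\phi_\varepsilon\,\chi_R\,dx \;-\; E_R(t),
\]
where $E_R$ collects cutoff errors supported on the annulus $R \leq |x| \leq 2R$. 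Jensen's inequality applied to the probability measure $\phi_\varepsilon\chi_R/\|\phi_\varepsilon\chi_R\|_1$ gives $\int u^p\phi_\varepsilon\chi_R \geq C\,R^{-(N+\alpha(\omega))(p-1)}\,J_R(t)^p$. The subcriticality condition $p \leq p^*(\omega)$ amounts exactly to $(N+\alpha(\omega))(p-1) \leq 2$, which on the parabolic scale $t \sim R^2$ is precisely what is needed to dominate $E_R$ and force $J_R(t)$ to blow up in finite time as $R \to \infty$, contradicting global existence.

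The step I expect to be hardest is the critical case $p = p^*(\omega)$, where the exponent on $R$ in the integral inequality is exactly borderline and the plain Kaplan argument is not enough. For that one must refine, for instance by iterating the inequality along a geometric sequence of parabolic scales or by combining the Duhamel lower bound $u(\cdot,t) \geq \int_0^t e^{(t-s)(\Delta - V)}\,u(s)^p\,ds$ with sharp lower bounds on the heat kernel of $\Delta - \omega/|x|^2$, in the spirit of Hayakawa and Kobayashi--Sirao--Tanaka. A further subtlety is the endpoint Hardy value $\omega = -(N-2)^2/4$, where the two roots of $\alpha(\alpha+N-2) = \omega$ coincide and both the self-similar supersolution in part~(1) and the weight in part~(2) require a logarithmic correction.
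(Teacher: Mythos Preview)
This theorem is not proved in the present paper at all: it is quoted from Pinsky's work \cite{Pinsky09} as background, alongside the analogous results of Zhang and Ishige, to motivate the paper's own Theorem~\ref{thm:1} and Theorem~\ref{thm:3}. So there is no ``paper's own proof'' to compare your proposal against.

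That said, it is worth contrasting your direct approach with the machinery the paper develops, since the paper does indicate (in Section~\ref{example}) how its framework recovers the nonexistence half of such results. Your route for part~(2) is the classical Kaplan--Fujita weighted-integral argument with the explicit weight $|x|^{\alpha(\omega)}$, and you correctly identify the critical case $p=p^*(\omega)$ and the Hardy endpoint $\omega=-(N-2)^2/4$ as the genuinely delicate steps, requiring either iteration along dyadic scales or sharp heat-kernel lower bounds. The paper's alternative would be: take a positive solution $h$ of $\Delta h=Vh$ with $h(x)\asymp|x|^{\alpha(\omega)}$ for large $|x|$, perform the Doob transform to the weighted problem~(\ref{eqT}), observe that $\nu(B(0,r))\asymp r^{N+\alpha(\omega)}$, and invoke Theorem~\ref{thm:1}. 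The advantage of the paper's method is that the critical exponent is handled uniformly---the logarithmic factor $\ln^Q r$ in the volume hypothesis and the averaged test function $\varphi_i=\frac{1}{i}\sum_{k=i+1}^{2i}\eta_k\gamma_k$ absorb the borderline case without a separate argument. The trade-off is that one must first produce $h$ with the correct asymptotics; for $\omega\ge 0$ this is Lemma~\ref{h-est-IK}, but for general $V\le\omega/|x|^2$ with $\omega<0$ the paper itself concedes (in the Remark following Example~2) that this step is open, so its framework does not fully recover Pinsky's part~(2) in the negative-$\omega$ regime either.

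For part~(1) your self-similar supersolution / Picard-iteration sketch is the standard approach and is correct in outline; the paper's existence result (Theorem~\ref{thm:3}) proceeds instead by a contraction-mapping argument on a set controlled by the transformed heat kernel $\tilde P_{t+\delta}(x,x_0)$, but is stated only under the additional hypotheses that $V\ge 0$ is Green bounded and $(DUE)$ holds, so it does not cover Pinsky's full range $\omega\ge -(N-2)^2/4$.
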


Now let us  transfer our attentions from Euclidean space to
manifold. We make a rough assumption on manifold: assume that $M$ is a connected non-compact
geodesically complete Riemannian manifold, $d$ is the geodesic distance on $M$, and $\mu_0$ is the Riemannian
measure of $M$. Fix a reference point $x_0\in M$, let $B(x_0, r)$ denote the geodesic ball
on $M$ centered at $x_0$ with radii $r>0$.

The study of nonlinear parabolic equations on manifolds become more and more
intriguing, not only because that it has so many applications in geometry and many other areas,
but also because usually the approach which is applied for the manifold case is quite different from the Euclidean ones.

In \cite{Zhang-duk99},  Zhang provided a unified approach to obtain
blow-up results for several variations of problem (\ref{eq-pra}) when $V(x)=0$.
To cite his result more precisely,  let us introduce his  assumptions on the manifold
\begin{enumerate}
\item[(i).]{$\mu_0(B(x, r))\leq Cr^{\alpha}$, when $r$ is large and for all $x\in M$.}
\item[(ii).]{$\frac{\partial \log g^{\frac{1}{2}}}{\partial r}\leq \frac{C}{r}$, where $r=d(x_0, x)$ is smooth. Here $x_0$ is a fixed reference point, and $g^{\frac{1}{2}}$ is the volume density of the manifold.}
\end{enumerate}
 Zhang obtained  Fujita exponent of  problem (\ref{eq-pra}) when $V(x)=0$.
\begin{theorem}\label{zhang-fujita}\cite[Zhang]{Zhang-duk99}\RM\;
Assume  conditions (i) and (ii) on manifold are satisfied, and $\alpha\geq1$. If $1<p\leq 1+\frac{2}{\alpha}$, then problem (\ref{eq-pra})
possesses no global positive solution to (\ref{eq-pra}).
\end{theorem}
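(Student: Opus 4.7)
The plan is to use the test-function method in the style of Mitidieri--Pohozaev and Baras--Kersner, adapted to the manifold $M$ through hypotheses (i) and (ii). Suppose for contradiction that a global nonnegative solution $u$ to (\ref{eq-pra}) with $V\equiv 0$ exists. Choose smooth cutoffs $\eta,\zeta:[0,\infty)\to[0,1]$ equal to $1$ on $[0,1]$ and vanishing on $[2,\infty)$, with $\zeta$ nonincreasing, and for $R\geq 1$ set
\begin{equation*}
\psi_R(x,t)=\eta\!\left(\frac{d(x_0,x)}{R}\right)\zeta\!\left(\frac{t}{R^2}\right),\qquad \phi_R=\psi_R^{2p/(p-1)}.
\end{equation*}
The exponent $2p/(p-1)$ is tuned so that the ratios $|\partial_t\phi_R|^{p/(p-1)}\phi_R^{-1/(p-1)}$ and $|\Delta\phi_R|^{p/(p-1)}\phi_R^{-1/(p-1)}$ stay bounded. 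Condition (ii) is used precisely to control the radial Laplacian: writing $\Delta$ in polar form,
\begin{equation*}
\Delta(\eta(r/R))=R^{-2}\eta''(r/R)+R^{-1}\frac{\partial\log g^{1/2}}{\partial r}\,\eta'(r/R),
\end{equation*}
so (ii) yields $|\Delta\phi_R|\leq CR^{-2}\psi_R^{2p/(p-1)-2}$ pointwise almost everywhere, and similarly $|\partial_t\phi_R|\leq CR^{-2}\psi_R^{2p/(p-1)-2}$.

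Second, multiplying the equation by $\phi_R$, integrating over $M\times[0,\infty)$, and integrating by parts (justified by compact support of $\phi_R$) lead to the weak inequality
\begin{equation*}
\int_0^\infty\int_M u^p\phi_R\,d\mu_0\,dt\leq \int_0^\infty\int_M u\bigl(|\partial_t\phi_R|+|\Delta\phi_R|\bigr)\,d\mu_0\,dt,
\end{equation*}
once the nonnegative contribution $\int_M u_0\,\phi_R(\cdot,0)\,d\mu_0$ is discarded from the left and $\zeta'\leq 0$ is used. Applying H\"older with exponents $p$ and $p/(p-1)$ on the right and absorbing a fraction of $\int_0^\infty\int_M u^p\phi_R$ back into the left yields
\begin{equation*}
\int_0^\infty\int_M u^p\phi_R\,d\mu_0\,dt\leq C\int_0^\infty\int_M\bigl(|\partial_t\phi_R|^{p/(p-1)}+|\Delta\phi_R|^{p/(p-1)}\bigr)\phi_R^{-1/(p-1)}\,d\mu_0\,dt\leq CR^{\alpha+2-2p/(p-1)},
\end{equation*}
using (i) to bound $\mu_0(B(x_0,2R))\leq CR^\alpha$ together with the time length $2R^2$. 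Since $\alpha+2-2p/(p-1)<0$ precisely when $p<1+2/\alpha$, letting $R\to\infty$ in that range forces $\int_0^\infty\int_M u^p\,d\mu_0\,dt=0$, contradicting $u>0$.

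The main obstacle is the critical exponent $p=1+2/\alpha$, for which the power of $R$ vanishes and the preceding estimate delivers only the finiteness $\int_0^\infty\int_M u^p\,d\mu_0\,dt<\infty$. The refinement I would pursue is to notice that the right-hand side of the weak inequality is supported in the annular region on which $\nabla\psi_R$ or $\partial_t\psi_R$ is nonzero,
\begin{equation*}
A_R=\bigl((B(x_0,2R)\setminus B(x_0,R))\times[0,2R^2]\bigr)\cup\bigl(B(x_0,2R)\times[R^2,2R^2]\bigr).
\end{equation*}
Re-running H\"older with the $L^p$ factor of $u^p$ restricted to $A_R$ gives
\begin{equation*}
\int_0^\infty\int_M u^p\phi_R\,d\mu_0\,dt\leq C\left(\int\int_{A_R} u^p\,d\mu_0\,dt\right)^{1/p}\cdot R^{(p-1)(\alpha+2-2p/(p-1))/p},
\end{equation*}
which in the critical case reduces to a multiple of $\bigl(\int\int_{A_R}u^p\bigr)^{1/p}$. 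Since $\int_0^\infty\int_M u^p<\infty$ and $A_R$ exits every compact subset of $M\times[0,\infty)$, dominated convergence forces $\int\int_{A_R}u^p\to 0$, hence $\int_0^\infty\int_M u^p\phi_R\to 0$, yielding the same contradiction.
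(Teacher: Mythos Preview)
Your argument is correct and is essentially Zhang's own route: double integration by parts in space, then H\"older, then the annular refinement for the critical exponent. One small point of care: condition (ii) is one-sided, so with $\eta$ chosen nonincreasing it yields only $(-\Delta\phi_R)_+\le CR^{-2}\psi_R^{m-1}$, not a two-sided bound on $|\Delta\phi_R|$. This is harmless, since after integrating by parts the term to control is $\int u(-\Delta\phi_R)\le\int u(-\Delta\phi_R)_+$, and the H\"older step goes through exactly as you wrote. You should also remark that the cut locus of $x_0$ has measure zero and contributes a nonpositive singular part to $\Delta r$ in the distributional sense, which again works in your favour.

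The paper does not prove this cited theorem directly; it instead establishes the stronger Corollary~\ref{cor:2}, which dispenses with condition (ii) altogether and weakens (i) to the logarithmically corrected growth $\mu_0(B(x_0,r))\le Cr^{\alpha}\ln^{\alpha/2}r$. The method there differs from yours in two essential respects. First, the paper never integrates by parts twice in space: it works from the weak form $\int(\nabla v,\nabla\psi)$ with the pair of test functions $\psi=v^{-a}\varphi^b$ and $\psi=\varphi^b$, absorbing $|\nabla v|^2$ by Young's inequality, so that only $|\nabla\varphi|$ (never $\Delta\varphi$) appears in the estimates; since $|\nabla r|=1$ everywhere, no Laplacian comparison is needed, and this is precisely why (ii) becomes superfluous. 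Second, to capture the extra logarithm the paper replaces your single-scale cutoff by the dyadic average $\varphi_i=\frac1i\sum_{k=i+1}^{2i}\eta_k(t)\gamma_k(x)$, coupled with the choice $a=1/i$; the averaging generates an additional factor $i^{-1}$ that exactly cancels the $\ln^Q r$ growth of the volume. Your more elementary single-scale approach is adequate for Zhang's hypotheses because (ii) lets you bound $-\Delta\phi_R$ directly and there is no logarithmic correction to absorb; the paper's machinery buys the removal of (ii) and the sharper volume condition. Your treatment of the critical case $p=1+2/\alpha$ via the annular support $A_R$ is the same idea the paper uses in its final step, where the right-hand side is restricted to $D_{2^i}^c$ and shown to vanish as $i\to\infty$.
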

The approach applied by Zhang in \cite{Zhang-duk99}  is quite powerful, and even very effective to  nonlinear homogeneous and
inhomogeneous equations, semilinear parabolic equations  and porous medium equations with nonlinear source, even to the blow-up problems in exterior domains \cite{Zhang-ext}.
Zhang's approach is by first constructing a suitable integral functional to show that the integral functional in selected
fixed domain will blow-up or will be identically equal to zero, then one can derive the blow-up results of nonlinear parabolic equations on manifolds.
However, after a very careful examination of Zhang's paper \cite{Zhang-duk99}, one can find that the assumptions  (i) and (ii) on manifold are essential in his approach, either can not
be relaxed or can not be dropped,
and also, the paper \cite{Zhang-duk99}
needs to deal with the critical case in a separate way to obtain the blow-up results.

In \cite{MMP-P}, Mastrolia, Monticelli and Punzo investigated the  problem (\ref{eq-pra})
with $V(x)\equiv0$
\begin{equation}\label{eq:para-ineq}
\left\{
\begin{array}{ll}
\partial_t u =\Delta u + u^p \quad \mbox{in $M \times (0, \infty )$}, \\
{u(x,0)= {u_0(x)}} \quad  \mbox{in $M$},
\end{array}
\right.
\end{equation}
They showed that Zhang's result can be improved: assumption (ii) can be dropped and assumption (i) can be relaxed to
a milder version
\begin{equation}\label{vol-mmp}
\mu_0(B(x_0, r))\leq Cr^{\alpha}\ln^{\frac{\alpha}{2}}r,\quad\mbox{for large enough $r$},
\end{equation}
for some reference point $x_0$, the same result still holds.
Their technique is to multiply the equation (\ref{eq-pra}) by $u^{a}\varphi^b $, and to obtain an
 integral estimate involving $u$ to show the nonexistence results.
 This technique is called the nonlinear capacity method,
which is systematically studied by Mitidieri and Pohozaev to deal with the elliptic inequality and parabolic differential inequalities.
Let us refer to \cite{Miti08, Miti09, Miti98, Miti01} for more details.

Here we point out that their proof relies on a very delicate choice of test function $\varphi$. Moreover, the sharpness of $\frac{\alpha}{2}$ is not shown in their paper \cite{MMP-P}.

In this paper, the purpose of the paper is threefold:
 the first one is to provide a sufficient condition for
the nonexistence of global solution to problem (\ref{eq-pra}) with  general $V$; the second one is to
attempt to
show a unified approach to deal with the parabolic equation with the potential term, moreover,
we present a totally different test function $\varphi$ from the one used in \cite{MMP-P}; the third one is to
show the sharpness of the (general) volume assumption of $\frac{\alpha}{2}$, which has not been shown before.

The idea of using the upper bound of volume of geodesic ball to derive  Liouville's
uniqueness type result has already been widely used
in literature. It originated from the celebrated work of Cheng and Yau \cite{ChengYau}. They proved that if
 on a geodesically complete
Riemannian manifold $M$, for some reference point $x_0\in M$, the following
\begin{equation*}
\mu_0(B(x_0, r))\leq Cr^{2},  \label{r2}
\end{equation*}%
holds for all large enough $r$, then any non-negative superharmonic function on $M$ is identically constant.
For other
related studies in this area we refer the readers to \cite{Grigoryan85,
Grigoryan-book,MMP-P, Wang-Xiao}.

Our paper is inspired by the elliptic results in \cite{Grigoryan13}, \cite{Gri-Sun-Igor18} and \cite{Sun-jmaa}, and
parabolic results in \cite{MMP-P}.
In the paper \cite{Grigoryan13}, Grigor'yan and the second author investigated the following differential inequality on $M$
\begin{equation}
\Delta u+u^{\sigma }\leq0,  \label{equ-const}
\end{equation}%
and proved that if, for some reference point $x_0\in M$ and $\alpha >2$, the following
\begin{equation}
\mu_0(B(x_0, r))\leq Cr^{\alpha }\ln ^{\frac{\alpha -2}{2}}r,  \label{Vra}
\end{equation}%
holds for all large enough $r$, then, for any $\sigma \leq \frac{\alpha }{\alpha -2}$,
the only nonnegative solution to (\ref{equ-const}) is identically equal to zero.
They also showed the exponents $\alpha$ and $\frac{\alpha-2}{2}$
in (\ref{Vra}) are sharp, and can not be relaxed.
Otherwise, there exists some model manifold which satisfies (\ref{Vra}) and admits positive solution
to (\ref{equ-const}).
The main technique applied in \cite{Grigoryan13} relies on a very delicate choice of test function on manifolds.

Recently in \cite{Gri-Sun-Igor18}, Grigor'yan, the second author and Verbitsky generalized the above results to the integrated
form, they obtained the necessary and sufficient condition for the existence of positive solutions
in terms of Green function of $\Delta$. Especially, when $M$ has nonnegative Ricci curvature, they showed that  problem (\ref{Vra}) admits a positive $C^2\text{-}$solution
if and only if
\begin{equation}
\int_{r_0}^{\infty}\frac{r^{\sigma-1}}{[\mu_0(B(x_0,r))]^{\sigma-1}}dr<\infty,
\end{equation}
for some reference point $x_0$ and $r_0>0$.

Further in \cite{Sun-jmaa}, the second author used two different test functions to show
that if the volume of geodesic ball satisfies some suitable growth, then the uniqueness result
of nonnegative solutions for semi-linear elliptic differential inequalities holds.


Throughout the paper, we require that $V$ admits a smooth positive solution to
\begin{equation}
\Delta h=Vh,
\end{equation}
on $M$.
Actually, such a solution $h$ exists widely, for example,
\begin{lemma}
\label{h-exist}\cite[Lemmas 10.1 and 10.3]{Grigoryan06}\RM\; For any smooth
non-negative function $\Psi $ on $M$, there exists a smooth positive
function $h$ such that
\begin{equation}
\Delta h=\Psi h\quad \text{on }M.  \label{h-def}
\end{equation}%
If in addition $\Psi $ is Green bounded, namely,
\begin{equation}
\sup_{x\in M}\int_{M}G(x,y)\Psi(y)d\mu_0(y)<\infty,
\end{equation}
then the equation \emph{(\ref{h-def}%
)} has a solution $h\asymp1$ on $M$.
Here, $G(x,y)$ is a finite positive Green function with respect to $\Delta$ on $M$, and
the sign $\asymp$ means the ratio of the left-hand and right-hand is bounded from above and below
by two positive constants.
\end{lemma}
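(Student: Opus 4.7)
The plan is to establish the two assertions separately via an exhaustion procedure, with the quantitative bound in the second part obtained from a carefully chosen exponential subsolution rather than a naive comparison.

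For the first assertion, I would fix an increasing sequence $\{\Omega_k\}_{k\geq 1}$ of relatively compact open subsets of $M$ with smooth boundaries exhausting $M$, and solve on each $\Omega_k$ the Dirichlet problem
\[
\Delta h_k = \Psi h_k \quad \text{in } \Omega_k, \qquad h_k = 1 \quad \text{on } \partial\Omega_k.
\]
Existence of a unique smooth solution follows from standard elliptic theory, since $\Psi \geq 0$ makes the bilinear form $\int(\nabla u \cdot \nabla v + \Psi uv)\, d\mu_0$ coercive on $H^1_0(\Omega_k)$. The strong maximum principle for the operator $L \defeq \Delta - \Psi$, whose zero-order coefficient $-\Psi$ is non-positive, forces $0 < h_k \leq 1$. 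Renormalizing $\tilde h_k \defeq h_k/h_k(x_0)$ at a reference point $x_0$ (so that the limit cannot collapse to zero), the elliptic Harnack inequality for $L$ and standard $C^{2,\alpha}$ interior estimates yield uniform local bounds, and a diagonal subsequence produces a smooth positive solution $h$ of $\Delta h = \Psi h$ on all of $M$.

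For the quantitative assertion, set $K \defeq \sup_{x \in M} \int_M G(x,y)\Psi(y)\, d\mu_0(y) < \infty$ and $\phi \defeq G\Psi$, a bounded smooth function with $0 \leq \phi \leq K$ and $\Delta \phi = -\Psi$. The crucial observation is that the candidate $\tilde h \defeq e^{-\phi}$ satisfies $e^{-K} \leq \tilde h \leq 1$ everywhere and, by the identity $\Delta(e^{-\phi}) = e^{-\phi}(|\nabla\phi|^2 - \Delta\phi)$, obeys
\[
\Delta \tilde h - \Psi \tilde h = e^{-\phi}|\nabla\phi|^2 \geq 0,
\]
i.e., $\tilde h$ is a positive subsolution of $\Delta h = \Psi h$ bounded away from zero. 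Repeating the exhaustion with boundary data $h_k = \tilde h$ on $\partial\Omega_k$, comparison with $\tilde h$ from below and with the constant $1$ from above (the latter being a supersolution since $L(1) = -\Psi \leq 0$) yields $e^{-K} \leq \tilde h \leq h_k \leq 1$ in $\Omega_k$. Passing to the limit along a Harnack-extracted subsequence then produces a smooth solution $h$ of $\Delta h = \Psi h$ on $M$ with $e^{-K} \leq h \leq 1$, i.e., $h \asymp 1$.

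The main obstacle I anticipate is engineering the right auxiliary function for the quantitative estimate: a naive choice of constant boundary data yields only $h \geq 1 - K$, which is useless when $K \geq 1$. The exponential ansatz $\tilde h = e^{-G\Psi}$ is precisely what converts the Green bound into a uniform lower bound of order $e^{-K}$ independently of the size of $K$, and I expect this construction to be the delicate step of the argument.
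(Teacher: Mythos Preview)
The paper does not give its own proof of this lemma: it is quoted verbatim from \cite[Lemmas~10.1 and~10.3]{Grigoryan06} and used as a black box. So there is nothing in the present paper to compare your argument against.

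That said, your proposal is a correct and self-contained proof. The exhaustion-plus-Harnack scheme for the first assertion is the standard route (and is essentially how such results are proved in the cited source). For the quantitative part, the idea of using $\tilde h = e^{-G\Psi}$ as a global subsolution is exactly the right device: the computation $\Delta \tilde h - \Psi \tilde h = e^{-\phi}|\nabla\phi|^2 \ge 0$ is clean, and comparing the Dirichlet solutions $h_k$ with the subsolution $\tilde h$ below and the supersolution $1$ above traps them uniformly in $[e^{-K},1]$, after which interior Schauder estimates give the compactness needed to pass to the limit. Your remark that a naive constant-boundary-data argument only yields $h \ge 1 - K$ is well taken and correctly identifies why the exponential ansatz is essential when $K \ge 1$.

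Two minor points worth tightening when you write this up. First, to assert that $\phi = G\Psi$ is smooth and satisfies $\Delta\phi = -\Psi$ pointwise, you should invoke elliptic regularity (the identity holds a priori in the distributional sense, and smoothness of $\Psi$ upgrades it). Second, the Harnack inequality you use in the first part is for the Schr\"odinger operator $\Delta - \Psi$ with $\Psi \ge 0$; this is classical, but it is worth citing explicitly since it is not the bare Laplacian Harnack inequality.
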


We then apply the technique of Doob's $h\text{-}$transform. Consider the weighted manifold $(M,\mu )$,
 where $\mu $ is a measure on $M$ defined by
 \begin{equation}\label{meas-mu}
d\mu:=h^2d\mu_0.
 \end{equation}
  The weighted Laplacian $\tilde{\Delta} $ of $(M,\mu )$ is defined by
\begin{equation*}
\tilde{\Delta}:  =\frac{1}{h^2 }\func{div}(h^2 \nabla).
\end{equation*}%
In particular, if $h \equiv 1$ then $\tilde{\Delta}$ is the Laplace-Beltrami
operator $\Delta$ on $M$.

By using $\Delta h=Vh$, for any smooth function $v(x)$, we know
\begin{equation*}
\tilde\Delta v+V v=(\Delta v+2\frac{\nabla h\cdot\nabla v}{h} )+\frac{\Delta h}{h}v
=\frac{1}{h}(h\Delta v+2\nabla h\cdot\nabla v+\Delta h v)=\frac{\Delta(hv)}{h},
\end{equation*}
Whence
\begin{equation*}
\tilde \Delta v=\frac{1}{h}(\Delta (hv)-V hv),
\end{equation*}
and
\begin{equation*}
\tilde\Delta=\frac{1}{h}\circ(\Delta-V)\circ h.
\end{equation*}

Let $u$ be a smooth positive solution to (\ref{eq-pra}) and let $u=hv$, we know from the above $v$
is a smooth positive global solution to the following Cauchy problem
\begin{equation}\label{eqT}
\left\{
\begin{array}{ll}
{{\partial_t v} = \tilde\Delta v+ {h^{p-1}v^p}} \quad \mbox{in $M \times (0, \infty )$}, \\
{v(x,0)= {v_0(x)}} \quad  \mbox{in $M$},
\end{array}
\right.
\end{equation}
where $v_0(x)=\frac{u_0}{h}(x)$.
Conversely, if $v$ is a smooth positive solution to problem (\ref{eqT}), then
$u=hv$ is a solution to (\ref{eq-pra}) with $u_0=hv_0$. Hence, the two problems
(\ref{eq-pra}) and (\ref{eqT}) are equivalent in the classical sense so that we only need to deal with
(\ref{eqT}) in the following. Actually, problems (\ref{eq-pra})
and (\ref{eqT}) can also be seen equivalent from the weak sense in the below.

 Denote by $W_{loc}^{1, 2}\left( M, d\mu\right)$ the space of functions $f\in L_{loc}^{2}\left(
M, d\mu\right) $  whose weak gradient $\nabla f$ is also in $L_{loc}^{2}\left(
M, d\mu\right)$. Denote by $W_{c}^{1, 2}\left( M, d\mu\right) $ the subspace of $
W_{loc}^{1, 2}\left( M, d\mu\right) $ of functions with compact support. Spaces
$W_{loc}^{1, 2}(M\times [0, \infty ), d\mu dt), W_{c}^{1, 2}(M\times [0, \infty ), d\mu dt)$
are defined similarly.
\begin{definition}\label{eq-def}\RM
$v$ is called a global weak solution to (\ref{eqT})
if $v$ is a nonnegative $W_{loc}^{1, 2}(M\times [0, \infty ), d\mu dt)$ function,
and for any nonnegative function $\psi \in W_{c}^{1, 2}(M\times [0, \infty ), d\mu dt)$, the following holds
\begin{eqnarray}
\label{eq:definition_of_weak_solution}
 \int_M \psi (x,0)v_0 d\mu + \int_0^\infty \int_M[ v\partial_t \psi  - (\nabla v, \nabla \psi) + h^{p-1}v^p\psi]d\mu dt = 0.
\end{eqnarray}
\end{definition}
\begin{remark}\RM
From Definition \ref{eq-def}, we know if $v$ is a weak solution to (\ref{eq-pra}), and $v_0$ is nonnegative, we obtain, for any
nonnegative function $\psi \in W_{c}^{1, 2}(M\times [0, \infty ), d\mu dt)$
\begin{eqnarray}\label{def: weak-sol-ineq}
\int_0^\infty \int_M h^{p-1}v^p\psi d\mu dt \leq \int_0^\infty \int_M (\nabla v, \nabla \psi) d\mu dt
-\int_0^{\infty}\int_{M}v\partial_t\psi d\mu dt.
\end{eqnarray}
\end{remark}

Before presenting the main results, we introduce some notations.
Let us define
\begin{eqnarray}\label{def-ab}
P := \frac{2}{{p - 1}}, \quad Q:= \frac{1}{{p - 1}},
\end{eqnarray}
and  a new measure $\nu$ on $M$ by
\begin{equation}\label{nu-meas}
d\nu=h^{-1}d\mu=hd\mu_0.
\end{equation}

We say that condition $(H)$ holds: if $\Delta h=Vh$ admits a smooth positive solution
$h$ and
there exist two nonnegative constants $\delta_1, \delta_2$, and some reference point $x_0$ such that
\begin{equation}\tag{$H$}
cr^{-\delta_1}\leq h(x)\leq Cr^{\delta_2},\quad\mbox{for large enough $r=d(x,x_0)$}.
\end{equation}

Our main result is the following.
\begin{theorem}\label{thm:1}\RM
Assume that condition $(H)$ is satisfied on $M$. If the following
\begin{eqnarray}\label{volume-1}
\nu (B(x_0, r)) \leq C{r^{P}}{\ln ^{Q}}r,
\end{eqnarray}
holds for all large enough $r$,
then  problem (\ref{eq-pra}) admits no global positive solution.
Here $P$ and $Q$ are defined as in (\ref{def-ab}).
\end{theorem}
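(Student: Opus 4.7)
I would argue by contradiction using the nonlinear capacity method on the weighted manifold $(M,\mu)$ obtained via the Doob $h$-transform. Suppose $u$ is a positive global solution of \eqref{eq-pra}; then $v := u/h$ is a nonnegative, nontrivial global weak solution of \eqref{eqT}, and the inequality \eqref{def: weak-sol-ineq} in the Remark applies to every nonnegative test function $\psi \in W_c^{1,2}(M \times [0,\infty), d\mu\,dt)$. Starting from this inequality, I would integrate by parts in the spatial gradient term to replace $(\nabla v, \nabla \psi)$ by $-v\,\tilde{\Delta}\psi$ and then apply H\"older's inequality with conjugate exponents $p$ and $p' = p/(p-1)$ to the resulting integral $\int v\,|\tilde{\Delta}\psi + \partial_t \psi|\,d\mu\,dt$, distributing the weight $h^{p-1}\psi$ so that after rearrangement the $h$-factor collapses from $h^{p-1}$ to $h^{-1}$. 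Since $d\nu = h^{-1}d\mu$ by \eqref{nu-meas}, this yields the master estimate
\[
I(\psi) := \int_0^\infty\!\!\int_M h^{p-1} v^p \psi\,d\mu\,dt \;\leq\; \int_0^\infty\!\!\int_M |\tilde{\Delta}\psi + \partial_t \psi|^{p'}\,\psi^{-1/(p-1)}\,d\nu\,dt,
\]
in which the measure $\nu$ of the hypothesis enters naturally.

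The next step is to insert a family of test functions $\psi_R = \phi_R^{\lambda}$ with $\lambda = 2p'$, where $\phi_R$ is a smooth nonnegative cutoff supported in the parabolic cylinder $\{d(x,x_0) \leq 2R\} \times [0, 2R^2]$, identically $1$ on $\{d(x,x_0) \leq R\} \times [0, R^2]$, and obeying $|\nabla \phi_R| \leq C/R$, $|\tilde{\Delta}\phi_R| \leq C/R^2$, $|\partial_t \phi_R|\leq C/R^2$. The exponent $\lambda = 2p'$ is exactly the threshold for which the integrand on the right of the master estimate is pointwise bounded by $C R^{-2p'}$ on the support of $\phi_R$. Combined with $\nu(B(x_0,r)) \leq C r^P \ln^Q r$ and the definitions $P=2/(p-1)$, $Q=1/(p-1)$, a direct computation gives
\[
I(\psi_R) \;\leq\; C\,R^{-2p'}\cdot R^2 \cdot \nu(B(x_0,2R)) \;\leq\; C\,R^{-2p'+2+P}\,\ln^Q R \;=\; C\,\ln^Q R,
\]
the power of $R$ cancelling exactly by the critical choice of $P$. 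The residual logarithmic factor is the borderline signature of the sharp case.

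To upgrade this bound to a contradiction I would refine $\phi_R$: replace the linear scale $R$ by a logarithmic one, taking $\phi_R$ as a function of $\ln\rho/\ln R$ for a parabolic radial coordinate $\rho(x,t)$, or equivalently decompose the test function into dyadic pieces on scales $\{2^k \leq \rho \leq 2^{k+1}\}$ and sum. Careful bookkeeping of these pieces against the volume bound shows that the total right-hand side stays uniformly bounded as $R\to\infty$, yielding first $\int_0^\infty\!\!\int_M h^{p-1}v^p\,d\mu\,dt<\infty$; a further application of the master estimate to annular cutoffs, together with this finiteness, forces the integral to vanish, so $v\equiv 0$, contradicting the positivity of $v$. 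I expect the main obstacle to be the test function construction itself: on a general geodesically complete manifold the distance $d(\cdot,x_0)$ is only Lipschitz and $\tilde{\Delta}d$ is not pointwise controlled without curvature hypotheses, so securing the bound $|\tilde{\Delta}\phi_R|\leq C/R^2$ is delicate. Overcoming this, while simultaneously extracting precisely the sharp exponent $Q = 1/(p-1)$, is exactly the role of the new test function $\varphi$ announced in the introduction as different from that of \cite{MMP-P}.
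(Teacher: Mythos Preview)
Your plan contains a genuine gap that you yourself flag but do not resolve: the master estimate you derive requires a pointwise bound on $|\tilde{\Delta}\phi_R|$, and on a geodesically complete manifold with only the volume hypothesis \eqref{volume-1} there is no Laplacian comparison available to deliver $|\tilde{\Delta}\phi_R|\le C/R^2$. You are right that this is the obstacle, but the paper does \emph{not} overcome it by a clever choice of $\varphi$; it overcomes it by never integrating by parts in space at all. The integration-by-parts step $(\nabla v,\nabla\psi)\mapsto -v\,\tilde{\Delta}\psi$ is precisely what the argument avoids.

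The paper instead substitutes the \emph{solution-dependent} test function $\psi = v^{-a}\varphi^{b}$ (with $0<a\ll 1$ and $b$ large) into \eqref{def: weak-sol-ineq}. The point is that $\nabla\psi$ produces a term $-a\,v^{-a-1}\varphi^{b}|\nabla v|^2$ with a good sign, which absorbs the cross term $(\nabla v,\nabla\varphi)$ via Young's inequality; a second substitution with $\psi=\varphi^{b}$ and elimination of the gradient-energy term leaves an inequality involving only $|\nabla\varphi|$ and $|\partial_t\varphi|$, never $\tilde{\Delta}\varphi$. After H\"older the right-hand side becomes a sum of terms $I^{(1\pm a)/p}J(\cdot)^{\cdots}$ and $I^{1/p}L(\cdot)^{\cdots}$, where $J,L$ are integrals of powers of $|\nabla\varphi|$, $|\partial_t\varphi|$ against $h^{\theta}d\nu\,dt$. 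The dyadic averaging you mention is used, but its role is to handle the sharp logarithmic exponent $Q$: one takes $\varphi_i=\frac{1}{i}\sum_{k=i+1}^{2i}\eta_k(t)\gamma_k(x)$ and simultaneously lets $a=1/i$, so that the factor $1/a$ in the estimates is exactly compensated by the $i^{-1}$ normalization of $\varphi_i$, and condition $(H)$ controls the small powers $h^{\pm ap/(p\pm a-1)}$. Your plan conflates two separate issues---avoiding $\tilde{\Delta}\varphi$, and capturing the sharp $\ln^{Q}r$---and the test function $\varphi_i$ addresses only the second; the first is handled by the $v^{-a}$ factor.
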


In particular, when $V\equiv0$, we choose $h\equiv1$, and hence condition $(H)$ is satisfied. By Theorem
\ref{thm:1}, we have
\begin{corollary}\label{cor:1}\RM
For $V\equiv0$, if, for some reference point ${x_0}\in M$, the following
\begin{eqnarray}\label{volume-1}
\mu_0 (B(x_0, r)) \leq C{r^{P}}{\ln ^{Q}}r,
\end{eqnarray}
holds for all large enough $r$,
then problem (\ref{eq-pra})
admits no global positive solution either.
\end{corollary}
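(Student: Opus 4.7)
The plan is to obtain the corollary as an immediate specialization of Theorem \ref{thm:1} to the case $V \equiv 0$. The only real task is to produce a candidate $h$ that satisfies the hypotheses of Theorem \ref{thm:1} and to check that the measure $\nu$ there reduces to the Riemannian measure $\mu_0$ in this setting.

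First I would take $h \equiv 1$ on $M$. Then $\Delta h = 0 = V h$ trivially, so $h$ is a smooth positive solution of $\Delta h = V h$ as required by the standing assumption preceding Lemma \ref{h-exist}. The two-sided polynomial bound in condition $(H)$ becomes $c \leq h(x) \leq C$, which holds with $c = C = 1$, so one may take $\delta_1 = \delta_2 = 0$; thus $(H)$ is verified.

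Next I would compute the measure $\nu$ appearing in Theorem \ref{thm:1}. By the definition \eqref{nu-meas}, $d\nu = h\,d\mu_0$; with $h \equiv 1$ this gives $\nu = \mu_0$. Therefore the hypothesis $\nu(B(x_0,r)) \leq C r^{P} \ln^{Q} r$ of Theorem \ref{thm:1}, with $P, Q$ defined in \eqref{def-ab}, becomes exactly the hypothesis $\mu_0(B(x_0,r)) \leq C r^{P} \ln^{Q} r$ of the corollary, which is our assumption.

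With both hypotheses of Theorem \ref{thm:1} verified, the theorem directly yields that problem \eqref{eq-pra} admits no global positive solution, which is precisely the claim. There is no technical obstacle: the corollary is a transparent restatement of Theorem \ref{thm:1} in the potential-free case, and its purpose is to make explicit that Theorem \ref{thm:1} recovers (and, via the logarithmic factor $\ln^{Q} r$, matches in full generality) earlier Fujita-type results such as those of Zhang \cite{Zhang-duk99} and Mastrolia--Monticelli--Punzo \cite{MMP-P}.
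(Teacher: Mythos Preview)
Your proposal is correct and follows exactly the paper's own reasoning: the paper simply observes that for $V\equiv 0$ one may take $h\equiv 1$, so condition $(H)$ holds trivially, $\nu=\mu_0$, and Theorem \ref{thm:1} applies directly. There is nothing to add.
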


\begin{remark}\RM
Theorem \ref{thm:1} and Corollary \ref{cor:1} provide us an affirmative answer to the following question: how much could we relax the assumption on the volume growth of geodesic balls to ensure that problem
(\ref{eq-pra})
admits no global positive solution
 when  the nonlinear term $u^p$ is fixed?  In Section \ref{GE}, we show the sharpness of (\ref{volume-1}), which means that if we relax $P, Q$ a little,
there exists a global positive solution to (\ref{eq-pra}) on $M$ for small $u_0$.

Our method is to multiply the equation (\ref{eq-pra}) by $v^{a}\varphi^b $
( here $a, b$ are variable parameters).
 By building suitable integral estimates of $v$
 and choosing suitable test function $\varphi$, we can obtain the blow-up results.
 Actually, the test function $\varphi$ we use here can be considered as a parabolic version
 used in \cite{Sun-jmaa}.

\end{remark}

Corollary \ref{cor:1} can be presented in another equivalent form
\begin{corollary}\label{cor:2}\RM
For $V\equiv0$, if, for some reference point ${x_0}\in M$ and $\alpha>0$, the following
\begin{eqnarray}\label{volume-equiv}
\mu_0 (B(x_0, r)) \leq Cr^{\alpha}\ln ^{\frac{\alpha}{2}}r,
\end{eqnarray}
holds for all large enough $r$. If $1<p\leq 1+\frac{2}{\alpha}$, then  problem (\ref{eq-pra}) admits no global positive solution.
\end{corollary}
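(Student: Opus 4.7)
The plan is to derive Corollary \ref{cor:2} as an immediate consequence of Corollary \ref{cor:1}, by checking that its hypotheses on the pair $(p,\alpha)$ force the volume bound $Cr^{\alpha}\ln^{\alpha/2}r$ to be dominated, for large $r$, by the bound $Cr^{P}\ln^{Q}r$ appearing in Corollary \ref{cor:1}.

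First I would translate the condition $1<p\leq 1+\tfrac{2}{\alpha}$ into inequalities on the exponents $P$ and $Q$ defined in \eqref{def-ab}. From $p-1\leq\tfrac{2}{\alpha}$ we get
\begin{equation*}
P=\frac{2}{p-1}\geq\alpha,\qquad Q=\frac{1}{p-1}\geq\frac{\alpha}{2}.
\end{equation*}
Thus for every $r$ large enough that $\ln r\geq 1$ (say $r\geq e$), both $r^{\alpha}\leq r^{P}$ and $\ln^{\alpha/2}r\leq \ln^{Q}r$ hold, and the hypothesis \eqref{volume-equiv} gives
\begin{equation*}
\mu_0(B(x_0,r))\leq C r^{\alpha}\ln^{\alpha/2}r\leq C r^{P}\ln^{Q}r
\end{equation*}
for all sufficiently large $r$.

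With this bound in hand, I would simply invoke Corollary \ref{cor:1} (the case $V\equiv 0$, $h\equiv 1$, so that condition $(H)$ is automatic and $\nu=\mu_0$) to conclude that problem \eqref{eq-pra} admits no global positive solution. There is no real obstacle to this argument: the only subtle point is to be careful that the constant $C$ absorbed into the inequality can be taken uniform for $r\geq e$ (or any fixed threshold past which the logarithm exceeds $1$), but this is immediate since we only compare the two volume bounds in the regime where $r$ and $\ln r$ are both at least $1$. Thus Corollary \ref{cor:2} is merely a rescaling/reparametrization of Corollary \ref{cor:1} that exchanges the explicit Fujita-type exponent $p^{*}=1+\tfrac{2}{\alpha}$ for the volume description in terms of $P$ and $Q$.
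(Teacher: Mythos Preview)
Your argument is correct and matches the paper's treatment: the paper simply presents Corollary~\ref{cor:2} as ``another equivalent form'' of Corollary~\ref{cor:1} without further proof, and your verification that $p\le 1+\tfrac{2}{\alpha}$ gives $P\ge\alpha$, $Q\ge\tfrac{\alpha}{2}$, hence $r^{\alpha}\ln^{\alpha/2}r\le r^{P}\ln^{Q}r$ for large $r$, is exactly the intended reduction.
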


\begin{remark}\RM
Corollary \ref{cor:2} tells us if we know the upper bound of the volume of geodesic ball, then we can determine the range of $p$ to suffice that problem
 (\ref{eq-pra}) admits no global positive solution. Here the volume upper bound
 condition (\ref{volume-equiv})
 is also sharp, and can not be relaxed either, please see Theorems \ref{thm:3} and \ref{thm:4}.

 Corollary \ref{cor:2} is a generalization of Zhang's result, please see Theorem \ref{zhang-fujita}.
 Corollary \ref{cor:2} was first obtained by Mastrolia, Monticelli, and Punzo in \cite{MMP-P}.
\end{remark}


\bigskip

We then turn to study the existence of global solutions to problem (\ref{eq-pra}).
For that, we need  slightly
strengthen our assumptions on $M$. Let $\tilde{P}_t(x,y)$ be the smallest fundamental solution of the heat equation
$$\partial_t v=\tilde\Delta v\quad \mbox{on $M$}.$$
We know $\tilde{P}_t(x,y)$ is called the heat kernel of $\tilde{\Delta}$, and has the following properties
\begin{itemize}
\item
{Symmetry: $\tilde{P}_t(x,y)=\tilde{P}_t(y,x)$, for all $x,y\in M, t>0$.}

\item{Markovian property:
$\tilde{P}_t(x,y) \geq 0$, for all $x$, $y \in M$ and $t>0$, and
\begin{equation}\label{hk-markov}
\int_M \tilde{P}_t(x,y)d\mu(y) \leq 1,\quad\mbox{for all $x\in M$ and $t>0$}.
\end{equation}}

\item{ The semigroup identity: for all $x$, $y \in M$ and $t$, $s>0$,
\begin{equation}\label{hk-semigroup}
\tilde{P}_{t+s}(x,y) = \int_M \tilde{P}_t(x,z)\tilde{P}_s(z,y)d\mu(z).
\end{equation}}

\item{Approximation of identity: for any $f\in L^2(M,d\mu)$,
\begin{equation}\label{hk-iden}
\left\Vert\int_{M}\tilde{P_t}(x,y)f(y)d\mu(y)-f\right\Vert_{L^2(M,d\mu)}\to 0,\quad\mbox{as $t\to 0_+$}.
\end{equation} }
\end{itemize}
Let $P_t^{V}(x,y)$ denote the heat kernel of $-\Delta+V$ on $(M,\mu_0)$.
When $V=0$, we denote by $P_t(x,y):=P_t^{0}(x,y)$ the heat kernel of $\Delta$.
When $M$ has nonnegative Ricci curvature, by famous Li-Yau estimate in \cite{Li-Yau86}, we have
\begin{equation}
P_t(x,y)\asymp \frac{C}{\mu_0(x, \sqrt{t})}\exp{\left(-\frac{d^2(x,y)}{ct}\right).}
\end{equation}
Especially, when
$M=\mathbb{R}^N$
$$P_t(x,y)=\frac{1}{(4\pi t)^{\frac{N}{2}}}\exp\left(-\frac{|x-y|^2}{4t}\right).$$

The questions to obtain the lower bound and upper bound of heat kernels  $\tilde{P}_t(x,y)$ and $P_t^V(x,y)$ under different geometric
conditions on the underlying manifold have been extensively studied in the past few decades, let us
refer to the papers \cite{Chavel84, Davies89,Grigoryan06, Grigoryan-book,VSC92}.

We say $P_t^{V}$ satisfies the condition $(DUE)$, if $P_t^V$ has the following upper estimate
\begin{equation}
P_t^{V}(x,y)\leq \frac{C_1}{\mu_0(x, \sqrt{t})},\tag{$DUE$}
\end{equation}
for some constant $C_1$.

The heat kernels $P_t^V$ and $\tilde{P}_t$ are bridged by the following lemma.
\begin{lemma}\cite[Lemma 4.7]{Grigoryan06}\label{heat-rela}\RM\;
The heat kernels $P_t^V$ and $\tilde{P}_t$ have the following relation:
\begin{equation}
P_t^{V}(x,y)=\tilde{P}_t(x,y)h(x)h(y).
\end{equation}
\end{lemma}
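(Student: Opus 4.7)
The plan is to show that the candidate kernel
$$K_t(x,y) \defeq h(x)h(y)\tilde{P}_t(x,y)$$
satisfies all the characterising properties of the minimal heat kernel of $-\Delta+V$ on $(M,\mu_0)$, and then invoke uniqueness of the minimal fundamental solution. This approach exploits exactly the intertwining identity $\tilde{\Delta} = h^{-1}\circ(\Delta-V)\circ h$ already derived in the excerpt, which is the PDE content of Doob's $h$-transform.

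First I would check that $K_t$ is a positive, symmetric fundamental solution of $\partial_t u=(\Delta-V)u$ on $(M,\mu_0)$. Symmetry and positivity are immediate from the corresponding properties of $\tilde{P}_t$ together with $h>0$. For the equation, applying the intertwining relation pointwise in $x$ yields
\begin{equation*}
\partial_t K_t(x,y) = h(x)h(y)\,\tilde{\Delta}_x \tilde{P}_t(x,y) = h(y)(\Delta-V)_x\!\bigl[h(x)\tilde{P}_t(x,y)\bigr] = (\Delta-V)_x K_t(x,y),
\end{equation*}
and symmetry gives the same in $y$. For the initial condition, for a nice test function $f$,
\begin{equation*}
\int_M K_t(x,y) f(y)\,d\mu_0(y) = h(x)\int_M \tilde{P}_t(x,y)\,\frac{f(y)}{h(y)}\,d\mu(y) \longrightarrow h(x)\cdot\frac{f(x)}{h(x)} = f(x)
\end{equation*}
as $t\to 0_+$, thanks to \eqref{hk-iden} applied to $f/h$ and the identity $d\mu = h^2\,d\mu_0$. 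The semigroup identity is a one-line computation absorbing the two copies of $h(z)$ into the change of measure:
\begin{equation*}
\int_M K_t(x,z)K_s(z,y)\,d\mu_0(z) = h(x)h(y)\int_M \tilde{P}_t(x,z)\tilde{P}_s(z,y)\,h(z)^2\,d\mu_0(z) = h(x)h(y)\tilde{P}_{t+s}(x,y),
\end{equation*}
which equals $K_{t+s}(x,y)$ by \eqref{hk-semigroup}.

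The genuinely delicate point, and the step I would treat most carefully, is the minimality. Both $P_t^V$ and $\tilde{P}_t$ are defined as the smallest nonnegative fundamental solutions of their respective equations, so to conclude $P_t^V=K_t$ I need to argue that $h$-transform transports minimal fundamental solutions to minimal ones. The natural route is via the standard exhaustion construction: let $\{\Omega_k\}$ be a relatively compact exhaustion of $M$ and let $\tilde{P}_t^{\Omega_k}$, $(P^V)_t^{\Omega_k}$ denote the Dirichlet heat kernels of $\tilde{\Delta}$ on $(\Omega_k,\mu)$ and of $\Delta-V$ on $(\Omega_k,\mu_0)$. The same four verifications above, but on $\Omega_k$ with the Dirichlet boundary condition (which is unaffected by multiplication by the positive bounded function $h|_{\Omega_k}$), give $(P^V)_t^{\Omega_k}(x,y) = h(x)h(y)\tilde{P}_t^{\Omega_k}(x,y)$. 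Letting $k\to\infty$ and using monotone convergence of the Dirichlet kernels to the minimal kernel on both sides then yields the claimed identity. The rest of the proof is routine once the $h$-transform correspondence between Dirichlet problems is set up cleanly.
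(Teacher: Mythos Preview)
The paper does not actually prove this lemma; it is quoted directly from \cite[Lemma 4.7]{Grigoryan06} and used as a black box, so there is no ``paper's own proof'' to compare against. Your sketch is the standard argument for the Doob $h$-transform identity and is correct: the intertwining relation $\tilde\Delta = h^{-1}\circ(\Delta-V)\circ h$ handles the PDE, the change of measure $d\mu = h^2\,d\mu_0$ takes care of the semigroup and initial-data computations, and the exhaustion argument with Dirichlet kernels is exactly the right way to transfer minimality (since on each precompact $\Omega_k$ the Dirichlet heat kernel is unique, the identity $(P^V)_t^{\Omega_k} = h(x)h(y)\tilde P_t^{\Omega_k}$ holds without any minimality subtlety, and the limit $k\to\infty$ then yields the minimal kernels on both sides by construction). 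This is essentially the proof one finds in Grigor'yan's survey.
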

If condition $(DUE)$ is satisfied on $M$, and $V$ is Green bounded and nonnegative, by Lemma \ref{heat-rela},
we have
\begin{equation}
\tilde{P}_t(x,y)\leq\frac{C}{\mu_0(x,\sqrt{t})}.
\end{equation}
for some constant $C$.

Our existence result is stated as follows.
\begin{theorem}\label{thm:3}\RM
Assume that $V\geq0$ is Green bounded and $P_t^V$ satisfies condition $(DUE)$.
If, for some $\varepsilon>0$, the following inequality
\begin{equation}\label{vol-e-1}
\mu_0(B(x_0,r))\geq cr^{P}\ln^{Q+\varepsilon}r,
\end{equation}
holds for all large enough $r$, then there exists a global positive solution to
(\ref{eq-pra}) for some small $u_0$. Here $P, Q$ are defined as in (\ref{def-ab}).
\end{theorem}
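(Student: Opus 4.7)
The plan is to build on the Doob $h$-transform reduction already set up before the theorem: since $V\ge 0$ is Green bounded, Lemma \ref{h-exist} produces a positive solution $h$ of $\Delta h=Vh$ with $h\asymp 1$ on $M$, so $\mu\asymp\mu_0$, the coefficient $h^{p-1}$ lies between two positive constants, and constructing a global positive solution $u$ of (\ref{eq-pra}) amounts to constructing a global positive solution $v$ of (\ref{eqT}) with $v_0=u_0/h$. Combining hypothesis $(DUE)$ with Lemma \ref{heat-rela} and $h\asymp 1$ yields the heat kernel upper bound $\tilde P_t(x,y)\le C/\mu(x,\sqrt t)$, and by symmetry the same bound with $x$ and $y$ swapped; the volume hypothesis transfers to $\mu(B(x_0,r))\ge c\,r^P(\ln r)^{Q+\varepsilon}$ for all large $r$.

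I would then construct $v$ by Picard iteration. Pick $v_0$ nonnegative, bounded, supported in $B(x_0,1)$, with $\|v_0\|_\infty=\varepsilon_0$ to be chosen small, and define
\begin{equation*}
v^{(0)}\defeq \tilde P_t v_0,\qquad v^{(n+1)}(x,t)\defeq \tilde P_t v_0(x)+\int_0^t \tilde P_{t-s}\bigl(h^{p-1}(v^{(n)})^p\bigr)(x)\,ds.
\end{equation*}
The goal is to show that $w\defeq 2\,\tilde P_t v_0$ dominates every iterate; then the $v^{(n)}$ are monotone increasing in $n$ and converge to a nonnegative mild solution $v\le w$ of (\ref{eqT}), which a standard verification via dominated convergence in the integral formulation upgrades to a weak solution in the sense of Definition \ref{eq-def}. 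Substituting $w=2\tilde P_s v_0$ inside the integral, using $h^{p-1}\le C_h$, the pointwise bound $(\tilde P_s v_0)^p\le \|\tilde P_s v_0\|_\infty^{p-1}\,\tilde P_s v_0$, and the semigroup identity $\tilde P_{t-s}\tilde P_s=\tilde P_t$, the dominance property reduces, by induction on $n$, to the single scalar inequality
\begin{equation*}
2^p C_h\int_0^\infty \|\tilde P_s v_0\|_\infty^{p-1}\,ds\le 1.
\end{equation*}

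The key estimate is therefore the decay of $\|\tilde P_s v_0\|_\infty$ in $s$. For $s\le 1$ the Markov property gives $\|\tilde P_s v_0\|_\infty\le \varepsilon_0$. For $s\ge 1$ I would use the $y$-symmetric bound $\tilde P_s(x,y)\le C/\mu(y,\sqrt s)$: since $v_0$ is supported in $B(x_0,1)$, each $y\in\operatorname{supp}v_0$ satisfies $B(x_0,\sqrt s/2)\subset B(y,\sqrt s)$ as soon as $\sqrt s\ge 2$, so the volume lower bound yields $\mu(y,\sqrt s)\gtrsim s^{P/2}(\ln s)^{Q+\varepsilon}$ and hence $\|\tilde P_s v_0\|_\infty\le C\varepsilon_0\,s^{-P/2}(\ln s)^{-(Q+\varepsilon)}$. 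Using $P(p-1)/2=1$ and $(Q+\varepsilon)(p-1)=1+\varepsilon(p-1)>1$, the tail integrand decays like $s^{-1}(\ln s)^{-1-\varepsilon(p-1)}$, which is integrable, so $\int_0^\infty \|\tilde P_s v_0\|_\infty^{p-1}\,ds=O(\varepsilon_0^{p-1})$; shrinking $\varepsilon_0$ enforces the scalar inequality above and closes the argument, with $u=hv$ the sought global positive solution of (\ref{eq-pra}).

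The step I expect to be the main obstacle is the uniform-in-$x$ decay of $\|\tilde P_s v_0\|_\infty$: the direct bound $\tilde P_s(x,y)\le C/\mu(x,\sqrt s)$ only produces decay at points $x$ for which a lower volume bound is available, and no such bound is assumed at points $x$ far from $x_0$. Passing to the $y$-symmetric form of the heat kernel bound circumvents this obstruction, since the volume lower bound is then required only at points $y$ inside the fixed compact support of $v_0$, where it is supplied by the single hypothesis at the reference point $x_0$.
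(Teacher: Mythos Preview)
Your argument is correct and follows the same overall strategy as the paper: reduce via the $h$-transform with $h\asymp 1$, construct a mild solution of (\ref{eqT}) inside an invariant set bounded by a heat-kernel barrier, and close the estimate by showing that $(DUE)$ together with the volume lower bound forces the key time integral to behave like $\int s^{-1}(\ln s)^{-1-\varepsilon(p-1)}\,ds<\infty$.

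The implementation differs in two minor respects. The paper takes the barrier $\lambda\tilde P_{t+\delta}(\cdot,x_0)$ with a fixed time shift $\delta>1$, initial data $v_0\le\tfrac{\lambda}{2}\tilde P_\delta(\cdot,x_0)$, and runs a Banach contraction argument on $S_M=\{0\le v\le \lambda\tilde P_{t+\delta}(\cdot,x_0)\}$; you take compactly supported $v_0$, barrier $2\tilde P_t v_0$, and monotone Picard iteration. The paper's choice has the advantage that one argument of the heat kernel is always $x_0$, so $(DUE)$ gives $\tilde P_{s+\delta}(y,x_0)\le C/\mu(B(x_0,\sqrt{s+\delta}))$ directly, without your extra step of transferring the volume bound from $x_0$ to $y\in\operatorname{supp}v_0$; the $\delta$-shift simultaneously regularizes the small-$s$ contribution. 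Your route, on the other hand, avoids verifying the contraction estimate. Both are standard realizations of the same idea and lead to the same integrability criterion.
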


Theorem \ref{thm:3} also has an equivalent form.
\begin{theorem}\label{thm:4}\RM
Assume that $V\geq0$ is Green bounded and $P_t^V$ satisfies condition $(DUE)$.
Assume also, for some $\varepsilon>0$, the following inequality
\begin{equation}\label{vol-e-2}
\mu_0(B(x_0,r))\geq cr^{\alpha}\ln^{\frac{\alpha}{2}+\varepsilon}r,
\end{equation}
holds for all large enough $r$. If $p>1+\frac{2}{\alpha}$, then
there exists a global positive solution to
(\ref{eq-pra}) for some small $u_0$.
\end{theorem}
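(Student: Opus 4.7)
The plan is to deduce Theorem \ref{thm:4} directly from Theorem \ref{thm:3}; the statement is advertised as an ``equivalent form,'' and indeed the two differ only in the parametrization of the volume growth. No new analytic machinery is needed beyond what is already invoked for Theorem \ref{thm:3}.

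First I would translate the exponent hypothesis into the parameters of Theorem \ref{thm:3}. With $P = 2/(p-1)$ and $Q = 1/(p-1)$ as in (\ref{def-ab}), the assumption $p > 1 + 2/\alpha$ is equivalent to $(p-1)\alpha > 2$, that is $\alpha > P$; dividing by $2$ also gives $\alpha/2 > Q$. These two strict inequalities are the only pieces of information I will use about the relation between $\alpha$ and $p$.

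Next I would compare the volume lower bounds. Since $\alpha > P$, for any constant $0 < c' < c$ and any $\varepsilon > 0$ the ratio
\begin{equation*}
\frac{c r^{\alpha} \ln^{\alpha/2 + \varepsilon} r}{c' r^{P} \ln^{Q + \varepsilon} r} = \frac{c}{c'}\, r^{\alpha - P}\, \ln^{\alpha/2 - Q} r
\end{equation*}
tends to $+\infty$ as $r \to \infty$, because $\alpha - P > 0$ dominates any logarithmic factor. Hence the hypothesis $\mu_0(B(x_0,r)) \geq c r^{\alpha} \ln^{\alpha/2 + \varepsilon} r$ of Theorem \ref{thm:4} implies $\mu_0(B(x_0,r)) \geq c' r^{P} \ln^{Q + \varepsilon} r$ for all sufficiently large $r$, which is precisely the volume assumption (\ref{vol-e-1}) of Theorem \ref{thm:3} for the same value of $\varepsilon$.

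Finally, since the remaining hypotheses of the two theorems are literally the same (namely $V \geq 0$ is Green bounded and $P_t^V$ satisfies $(DUE)$), I would simply invoke Theorem \ref{thm:3} on the same manifold $M$ with the same exponent $p$ to obtain a global positive solution of (\ref{eq-pra}) for every sufficiently small $u_0$. There is no real obstacle in this argument; the substantive work (the $h$-transform reduction to (\ref{eqT}), the use of Lemma \ref{heat-rela} to transfer $(DUE)$ from $P_t^V$ to $\tilde{P}_t$ via $h \asymp 1$, and the construction of a global supersolution by semigroup methods coupled to the volume lower bound) is already carried out in the proof of Theorem \ref{thm:3}, so Theorem \ref{thm:4} follows as a one-line corollary.
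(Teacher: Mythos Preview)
Your proposal is correct and is exactly the approach the paper takes: the paper states Theorem~\ref{thm:4} as an ``equivalent form'' of Theorem~\ref{thm:3} and gives no separate proof, relying on precisely the reparametrization $p>1+2/\alpha \Leftrightarrow \alpha>P$ (hence $\alpha/2>Q$) that you spell out. Your comparison of the two volume lower bounds is the one-line reduction the paper has in mind.
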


The paper is organized as follows: In Section \ref{example}, we present some examples to see the applications of our main
result.
 In Section \ref{sec-blow-up}, we give the proof of Theorem
\ref{thm:1}. In Section \ref{GE},  we present the proof of
Theorem \ref{thm:3}.

\begin{notation}\RM\;
 The letters $C,C^{\prime },C_{0},C_{1}, c_0, c_1...$ denote positive
constants whose values are unimportant and may vary at different occurrences.
\end{notation}

\section{Some examples}\label{example}

In this section we present several examples to show the applications of Theorem \ref{thm:1} and
Corollary \ref{cor:2}.

First, let us make some preliminary works.
Define the Riesz potential on $\mathbb{R}^N$ for $0<\alpha<N$ by
\begin{equation}\label{RZ}
I_{\alpha}f(x)=c(N, \alpha)\int_{\mathbb{R}^N}\frac{f(y)}{|x-y|^{N-\alpha}}dy,
\end{equation}
where $f\in L_{loc}^1(\mathbb{R}^N)$, and $\int_{|x|\geq1}|x|^{N-\alpha}|f(x)|dx<\infty$, and
$$c(N, \alpha)=\frac{\Gamma(\frac{N-\alpha}{2})}{\pi^{\frac{N}{2}}2^{\alpha}\Gamma(\frac{\alpha}{2})}.$$
Here $\Gamma(\cdot)$ is the Gamma function.

\begin{lemma}
\label{HMV}
\cite[Corollary 2.9]{HMV99}\RM\;
If $V\leq 0$, and there exists some constant $C_2(N)$ such that
\begin{equation}\label{ineq-iter}
I_1[(I_1V)^2](x)\leq -C_2(N)I_1V(x),
\end{equation}
then there exists a positive solution $h$ to
$$\Delta h=Vh\quad\mbox{in $\mathbb{R}^N$}.$$
 Moreover, if
$I_2(-V)<\infty$, then the solution $h$ satisfies
\begin{equation}\label{h-est}
\exp(-I_2V)\leq h\leq \exp(-C_3I_2V),
\end{equation}
for some constant $C_3=C_3(N)>0$.
\end{lemma}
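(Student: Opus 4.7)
The plan is to construct $h$ as a Neumann series for the integral equation equivalent to $\Delta h = Vh$. Write $W := -V \ge 0$ and view the Riesz potential $I_2$ (up to a dimensional constant) as the Green potential of $-\Delta$ on $\mathbb{R}^N$. Setting $u_0 \equiv 1$ and $u_{k+1} := I_2(W u_k)$, I would take
$$h := \sum_{k=0}^{\infty} u_k.$$
Provided the series converges, $h$ formally satisfies $h = 1 + I_2(Wh)$, whence $-\Delta h = Wh$, that is $\Delta h = Vh$, and $h \ge u_0 = 1 > 0$ gives positivity for free.

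The heart of the matter is establishing the iterative pointwise estimate
$$u_k(x) \;\le\; \frac{C^k}{k!}\bigl(I_2 W(x)\bigr)^k,$$
which immediately yields convergence of the series and the upper bound $h \le \exp\bigl(C\, I_2 W\bigr) = \exp(-C_3 I_2 V)$ whenever $I_2(-V) < \infty$. The inductive step should reduce, via the composition identity $I_2 = c(N)\, I_1 \circ I_1$, to an estimate purely in terms of $I_1$, so that hypothesis \eqref{ineq-iter} becomes applicable; each appearance of $(I_1 W)^2$ inside a nested Riesz potential is then collapsed back to $I_1 W$ at the cost of a multiplicative constant, and tracking the constants through the iteration produces the factorial factor $1/k!$. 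This is precisely the Hansson--Maz'ya--Verbitsky self-improving mechanism that is the main content of \cite{HMV99}.

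For the lower bound, once $h$ has been constructed as a classical positive solution, I would take logarithms and compute
$$-\Delta(\log h) \;=\; -\frac{\Delta h}{h} + \frac{|\nabla h|^2}{h^2} \;=\; -V + \frac{|\nabla h|^2}{h^2} \;\ge\; -V \;=\; W \;\ge\; 0.$$
Thus $\log h - I_2 W$ is superharmonic on $\mathbb{R}^N$. Because each $u_k$ vanishes at infinity whenever $I_2 W$ does, one has $h \to 1$ and $\log h \to 0$ at infinity, and similarly $I_2 W \to 0$. Applying the minimum principle to the superharmonic function $\log h - I_2 W$ (which has infimum $0$ at infinity) then yields $\log h \ge I_2 W = -I_2 V$, i.e.\ $h \ge \exp(-I_2 V)$, as required.

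The main obstacle is the inductive estimate in the second paragraph: the combinatorial bookkeeping needed to propagate \eqref{ineq-iter} through the nested $I_1$ expressions and extract a clean factorial bound is delicate, and all other steps (the fixed-point interpretation, the positivity $h \ge 1$, and the logarithmic lower bound) are comparatively routine once that estimate is in hand.
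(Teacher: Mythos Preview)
The paper does not supply a proof of this lemma at all: it is stated with the citation \cite[Corollary~2.9]{HMV99} and then used as a black box in Section~\ref{example}. There is therefore nothing in the paper to compare your argument against.

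That said, your sketch is a faithful outline of the Hansson--Maz'ya--Verbitsky method underlying the cited result. The Neumann-series construction $h=\sum_k u_k$ with $u_0\equiv 1$ and $u_{k+1}=I_2(Wu_k)$ is exactly how one produces the minimal positive solution of $h=1+I_2(Wh)$, and you have correctly located the nontrivial step: propagating the hypothesis $I_1[(I_1W)^2]\le C_2\,I_1W$ through the nested potentials via the semigroup identity $I_2=c\,I_1\circ I_1$ to obtain a factorial-type bound on $u_k$. One small point about your lower-bound argument: the minimum principle for the superharmonic function $\log h - I_2W$ requires knowing its behaviour at infinity, and the claim that $I_2W(x)\to 0$ as $|x|\to\infty$ does not follow from $I_2W<\infty$ pointwise without an integrability assumption on $W$. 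A cleaner route, staying within the series framework, is to prove the companion inequality $u_k\ge (I_2W)^k/k!$ directly by induction (using that $-\Delta\bigl((I_2W)^k/k!\bigr)\le W\,(I_2W)^{k-1}/(k-1)!$ and inverting), which yields $h\ge\exp(I_2W)=\exp(-I_2V)$ without any appeal to boundary behaviour.
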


\begin{proposition}\label{Zhang-bdd}
\cite[Proposition 2.1]{Zhang-98}\RM\;
Let $V(x)=\frac{1}{1+|x|^b}$ for some $b>2$. Then
\begin{equation}\label{sup-bdd}
\sup_{x\in\mathbb{R}^N}I_2V(x)<\infty.
\end{equation}
\end{proposition}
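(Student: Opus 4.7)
The plan is to show $I_2 V(x) = c(N,2)\int_{\mathbb{R}^N} \frac{dy}{(1+|y|^b)\,|x-y|^{N-2}}$ is uniformly bounded by splitting $\mathbb{R}^N$ into annular regions around both $x$ and the origin. The role of the hypothesis $b>2$ should appear in every region, and will be the source of uniform boundedness.

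First I will handle $|x| \le 1$ separately by a one-shot estimate. There, since $V \le 1$ is bounded and $|x-y|^{2-N}$ is locally integrable, the integral over $|y-x|\le 2$ is finite and uniformly bounded; over $|y-x|\ge 2$ the kernel is bounded by $2^{2-N}$ and we are left with $\int_{\mathbb{R}^N} V\,dy$, which converges iff $b>N$. Since we only have $b>2$, I will instead use that $|y|\ge 1$ there, so $V(y)\le |y|^{-b}$ and $|x-y|\ge |y|/2$, reducing to $\int_1^\infty r^{1-b}\,dr<\infty$.

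For the main case $R:=|x|\ge 1$, I will decompose into three regions:

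\textbf{Region 1: $|y-x|\le R/2$.} Here $|y|\ge R/2$, so $V(y)\le C R^{-b}$. Integrating the bounded kernel gives
\[
\int_{|y-x|\le R/2}\frac{V(y)}{|x-y|^{N-2}}\,dy \;\le\; C R^{-b}\int_0^{R/2} r\,dr \;=\; C R^{2-b},
\]
which is bounded since $b>2$.

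\textbf{Region 2: $|y-x|\ge R/2$ and $|y|\le 2R$.} Here $|x-y|^{2-N}\le C R^{2-N}$, and splitting $V$ into its values on $|y|\le 1$ and $1\le |y|\le 2R$ gives
\[
\int_{|y|\le 2R} V(y)\,dy \;\le\; C\bigl(1 + \textstyle\int_1^{2R} r^{N-1-b}\,dr\bigr) \;\le\; C\bigl(1+R^{(N-b)_+}\bigr),
\]
so the contribution is at most $C R^{2-N}(1+R^{N-b})\le C(R^{2-N}+R^{2-b})$, bounded.

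\textbf{Region 3: $|y|\ge 2R$.} Then $|x-y|\ge |y|/2$, and
\[
\int_{|y|\ge 2R}\frac{V(y)}{|x-y|^{N-2}}\,dy \;\le\; C\int_{2R}^\infty r^{-b}\,r^{2-N}\,r^{N-1}\,dr \;=\; C\int_{2R}^\infty r^{1-b}\,dr \;=\; C R^{2-b},
\]
bounded thanks to $b>2$.

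Summing the three regions and combining with the separate estimate for $|x|\le 1$ yields a uniform bound. There is no real obstacle here beyond picking the right trichotomy; the one subtle point is that a naive split by $|y-x|$ alone would force $b>N$, so the decomposition really needs to see both the scale $|x|$ and the scale $|y-x|$, and the tail integral $\int^\infty r^{1-b}\,dr$ is the single convergence fact that $b>2$ is designed to provide.
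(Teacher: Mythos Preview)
Your argument is correct. The paper does not supply its own proof of this proposition; it is quoted verbatim from \cite{Zhang-98}, so there is no in-paper proof to compare against. Your annular decomposition (near $x$, intermediate, far from the origin) is exactly the standard way to handle such Riesz-potential bounds, and indeed it is the same scheme the paper employs in its detailed proof of the neighbouring Proposition~\ref{Prop-b>2} for $I_1V$. One cosmetic point: in Region~2 your bound $1+R^{(N-b)_+}$ silently absorbs a $\log R$ when $N=b$; since this is multiplied by $R^{2-N}$ with $N\ge 3$, the conclusion is unaffected, but it would be cleaner to state the three subcases $N\lessgtr b$ explicitly as the paper does for $I_1$.
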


\begin{proposition}\label{Prop-b>2}\RM\;
Let $V(x)=\frac{1}{1+|x|^b}$ for some $b>2$. Then there exists a constant $C(N,b)>0$ such that for all $x\in \mathbb R^N$,
\begin{equation}\label{eq1}
I_1 [(I_1V)^2](x)\leq C(N,b)I_1 V(x).
\end{equation}
\end{proposition}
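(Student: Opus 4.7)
The strategy is to establish a two-sided pointwise decay bound $I_1V(x)\asymp (1+|x|)^{-\gamma}$ with $\gamma := \min(b-1,\,N-1)$, and then deduce \eqref{eq1} by a bootstrap: the square $(I_1V)^2$ decays like $(1+|y|)^{-2\gamma}$, which is small enough that $I_1$ of it still decays at rate $\gamma$. Since both sides of \eqref{eq1} are continuous and positive, the inequality is trivial on bounded sets, so we may assume $|x|$ is large throughout.

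For the upper bound on $I_1V(x)$, I would split the defining integral into the three standard pieces: the near region $\{|y|\leq |x|/2\}$ (where $|x-y|\gtrsim |x|$, so one factors out $|x|^{-(N-1)}$ and integrates $V$ directly), the middle annulus $\{|x|/2<|y|\leq 2|x|\}$ (where $V(y)\asymp |x|^{-b}$ and $\int|x-y|^{-(N-1)}dy$ over a ball of radius $|x|$ is of order $|x|$), and the far region $\{|y|>2|x|\}$ (where $|x-y|\asymp |y|$, and integrability at infinity uses $b>1$). Summing the three contributions gives a bound of order $|x|^{-\gamma}$, up to a harmless logarithmic factor exactly in the borderline case $b=N$. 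The matching lower bound is obtained by restricting the defining integral either to the unit ball around the origin (giving order $|x|^{-(N-1)}$) or to $\{|y-x|\leq |x|/2\}$ (giving order $|x|^{1-b}$), and taking the larger of the two.

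Given the upper bound, $(I_1V)^2(y)\leq C(1+|y|)^{-2\gamma}$, so I would apply the same three-region splitting with $(1+|y|)^{-2\gamma}$ in place of $V(y)$. An identical calculation yields $I_1[(I_1V)^2](x)\lesssim (1+|x|)^{-\gamma'}$ with $\gamma' = \min(2\gamma-1,\,N-1)$. The key algebraic check is $\gamma'\geq \gamma$: if $b\geq N$, then $\gamma=N-1$ and $2\gamma-1=2N-3\geq N-1$ trivially; if $2<b\leq N$, then $\gamma=b-1$, and $2\gamma-1=2b-3\geq b-1$ reduces to $b\geq 2$ (strict here), while $N-1\geq b-1$ holds by hypothesis. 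Hence $I_1[(I_1V)^2](x)\leq C(1+|x|)^{-\gamma}\leq C'\,I_1V(x)$, establishing \eqref{eq1}.

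The main obstacle is keeping track of which of $|x|^{-(N-1)}$ or $|x|^{1-b}$ dominates $I_1V(x)$, since this dichotomy propagates through the bootstrap and both regimes must be handled symmetrically; however the critical arithmetic in each case reduces cleanly to the standing hypothesis $b>2$, so no stronger assumption is required.
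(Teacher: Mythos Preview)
Your approach is correct and closely parallels the paper's own proof. Both arguments hinge on the same two-sided asymptotic $I_1V(x)\asymp (1+|x|)^{-\gamma}$ (with $\gamma=\min(b-1,N-1)$, plus a logarithm when $b=N$), obtained via the standard near/middle/far decomposition, and both then deduce \eqref{eq1} from this.

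The one notable difference in execution: in the case $N\geq b$ the paper observes the pointwise bound $(I_1V(x))^2\leq C\,V(x)$ directly (since $(1+\log|x|)^2|x|^{2-2b}\lesssim |x|^{-b}$ when $b>2$) and then simply applies $I_1$ to both sides, avoiding a second three-region computation. Your unified bootstrap---computing $I_1[(1+|y|)^{-2\gamma}]$ and checking $\gamma'=\min(2\gamma-1,N-1)\geq\gamma$---works equally well and is arguably cleaner in that it treats both regimes symmetrically; the paper's shortcut is a bit shorter in that half of the cases. One small point you glossed over: a second logarithmic borderline can occur when $2\gamma=N$ (i.e.\ $b=N/2+1$ in the range $2<b<N$), not only when $b=N$; but there too the strict inequality $b>2$ leaves enough room, so it is indeed harmless.
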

\begin{proof}
We divide the proof into two steps.

\textbf{Step 1.\;}
We show the following estimate
\begin{equation}\label{eq2}
I_1V(x)\asymp\left\{
              \begin{array}{ll}
                1, & \hbox{$|x|\leq1$;} \\
                 |x|^{1-b}+|x|^{1-N}\left(1+\int_{1}^{|x|}r^{N-b-1}dr\right), & \hbox{$|x|>1$.}
              \end{array}
            \right.
\end{equation}
By definition of $I_1V$, we have
\begin{equation}\label{eq3}
I_1V(x)=C(N)\int_{\mathbb R^N}\frac{dy}{(1+|x-y|^b)|y|^{N-1}}.
\end{equation}

\medskip

Firstly, we deal with the case that $|x|\leq1$. The integral of the right hand side of \eqref{eq3} can be written as
\begin{eqnarray}\label{eq4}
\int_{\mathbb R^N}\frac{dy}{(1+|x-y|^b)|y|^{N-1}}&=&\int_{|y|\leq2|x|}\frac{dy}{(1+|x-y|^b)|y|^{N-1}}
+\int_{|y|>2|x|}\frac{dy}{(1+|x-y|^b)|y|^{N-1}}\nonumber\\
&=:&J_1+J_2.
\end{eqnarray}
Then for $|y|\leq2|x|$, we have $|y-x|\leq3|x|\leq 3$, and $1+|x-y|^b\asymp1$. Using polar coordinates,
we obtain
\begin{equation*}
J_1\asymp\int_{|y|\leq2|x|}\frac{dy}{|y|^{N-1}}\asymp\int_0^{2|x|}\frac{r^{N-1}}{r^{N-1}}dr\asymp|x|.
\end{equation*}
For $|y|>2|x|$, we have $|y|/2\leq |y-x|\leq 3|y|/2$, and $1+|x-y|^b\asymp 1+|y|^b$, then by the fact $b>2$,
we obtain
\begin{equation*}
J_2\asymp\int_{|y|\geq2|x|}\frac{dy}{(1+|y|^b)|y|^{N-1}}=C(N)\int_{2|x|}^{\infty}\frac{dr}{1+r^{b}}\asymp1.
\end{equation*}
By substituting the two estimates to \eqref{eq4}, we obtain
\begin{equation*}
I_1V(x)\asymp1+|x|\asymp1,
\end{equation*}
which is the first estimate in \eqref{eq2}.

Secondly, when $|x|>1$, let us write the integral in \eqref{eq3} as
\begin{equation}\label{eq5}
\int_{\mathbb R^N}\frac{dy}{(1+|x-y|^b)|y|^{N-1}}=\int_{|y|\leq|x|/2}+\int_{|x|/2<|y|\leq2|x|}+\int_{|y|>2|x|}
=:K_1+K_2+K_3.
\end{equation}
Then we estimate $K_1,K_2,K_3$ respectively.

 For $|y|\leq|x|/2$, we have $|y-x|\asymp |x|$. Thus
\begin{equation*}
K_1\asymp\frac1{|x|^b}\int_{|y|\leq |x|/2}\frac{dy}{|y|^{N-1}}\asymp |x|^{1-b}.
\end{equation*}
For $|x|/2<|y|\leq 2|x|$, we have $|y|\asymp |x|$. Thus
\begin{equation*}
K_2\asymp \frac1{|x|^{N-1}}\int_{|x|/2<|y|\leq 2|x|}\frac{dy}{1+|x-y|^b}.
\end{equation*}
Noting that $\{y:\ |y-x|\leq|x|/2\}\subseteq\{y:\ |x|/2<|y|\leq 2|x|\}\subseteq\{y:\ |y-x|\leq3|x|\}$, we have
\begin{align*}
&\int_{|x|/2<|y|\leq 2|x|}\frac{dy}{1+|x-y|^b}\leq\int_{|z|\leq 3|x|}\frac{dz}{1+|z|^b}\asymp \int_0^3r^{N-1}dr+\int_3^{3|x|}r^{N-b-1}dr\\
&\asymp 1+\int_1^{|x|}r^{N-b-1}dr,
\end{align*}
and similarly,
\begin{equation*}
\int_{|x|/2<|y|\leq 2|x|}\frac{dy}{1+|x-y|^b}\geq\int_{|z|\leq |x|/2}\frac{dz}{1+|z|^b}\asymp 1+\int_{1}^{|x|}r^{N-b-1}dr.
\end{equation*}
Combining the above estimates, we obtain
\begin{equation*}
K_2\asymp |x|^{1-N}\left(1+\int_1^{|x|}r^{N+b-1}dr\right).
\end{equation*}
For $|y|>2|x|$, we have $|y-x|\asymp|y|$. Thus
\begin{equation*}
K_3\asymp \int_{|y|> 2|x|}\frac{dy}{(1+|y|^b)|y|^{N-1}}\asymp\int_{|y|> 2|x|}\frac{dy}{|y|^{b+N-1}}\asymp  |x|^{1-b}.
\end{equation*}

By substituting the estimates of $K_1,K_2,K_3$ into \eqref{eq5}, we obtain the second estimate in \eqref{eq2}.

\textbf{Step 2.\;}
Now we apply \eqref{eq2} to show \eqref{eq1} with $V=\frac1{1+|x|^b}$. We separate the proof into two cases.

\textbf{Case of $N\geq b$.\;}We show that there is a constant $C=C(N,b)>0$ such that for all $x\in \mathbb R^N$,
\begin{equation}\label{eq6}
(I_1V(x))^2\leq C V(x),
\end{equation}
and \eqref{eq1} follows immediately by taking $I_1$ on both sides of \eqref{eq6}.

When $|x|\leq1$,  \eqref{eq6} is true, since we have
\begin{equation*}
(I_1V)(x)\asymp V(x)\asymp 1.
\end{equation*}
When $|x|>1$,  by \eqref{eq2}, we have
\begin{equation*}
(I_1V)(x)\asymp\left\{
                 \begin{array}{ll}
                   (1+\log|x|)|x|^{1-b}, & \hbox{$N=b$,} \\
                   |x|^{1-b}, & \hbox{$N>b$.}
                 \end{array}
               \right.
\end{equation*}
Noting $b>2$, we have
\begin{equation*}
(I_1V(x))^2\leq C(1+\log|x|)^2|x|^{2-2b}\leq C|x|^{-b}\leq CV(x),
\end{equation*}
which proves \eqref{eq6}.

\textbf{Case of $N< b$.\;} By \eqref{eq2}, we have
\begin{equation}\label{eq7}
I_1V(x)\asymp\left\{
              \begin{array}{ll}
                1, & \hbox{$|x|\leq1$,} \\
                 |x|^{1-N}, & \hbox{$|x|>1$.}
              \end{array}
            \right.
\end{equation}
By definition
\begin{equation}\label{eq8}
I_1[(I_1V)^2](x)= C(N)\int_{\mathbb R^N}\frac{(I_1V)^2(y)dy}{|x-y|^{N-1}}.
\end{equation}

Let us first consider $|x|\leq1$. Applying \eqref{eq7}, we obtain
\begin{eqnarray*}
\int_{\mathbb R^N}\frac{(I_1V)^2(y)dy}{|x-y|^{N-1}}
&=&\int_{|y|\leq2}\frac{(I_1V)^2(y)dy}{|x-y|^{N-1}}+\int_{|y|>2}\frac{(I_1V)^2(y)dy}{|x-y|^{N-1}}\\
&\asymp& \int_{|y|\leq2}\frac{dy}{|x-y|^{N-1}}+\int_{|y|>2}|y|^{2-2N}\frac{dy}{|x-y|^{N-1}}\\
&\leq&\int_{|z|\leq3}\frac{dz}{|z|^{N-1}}+\int_{|y|>2}\frac{dy}{|y|^{3N-3}}\\
&\asymp& 1,
\end{eqnarray*}
which together with $I_1V(x)\asymp 1$, implies for $|x|\leq1$,
\begin{equation*}
I_1[(I_1V)^2](x)\leq CI_1V(x).
\end{equation*}
Then we consider $|x|>1$. Rewrite the integral in \eqref{eq8} as
\begin{eqnarray*}
\int_{\mathbb R^N}\frac{(I_1V)^2(y)dy}{|x-y|^{N-1}}&=&\left(\int_{|y|\leq1/2}+\int_{1/2<|y|\leq|x|/2}
+\int_{|x|/2<|y|\leq2|x|}+\int_{|y|>2|x|}\right)\frac{(I_1V)^2(y)dy}{|x-y|^{N-1}}\\
&=:&L_1+L_2+L_3+L_4.
\end{eqnarray*}
We estimate $L_i(i=1,2,3,4)$ as follows.

For $|y|\leq1/2$, we have by \eqref{eq7} that $I_1V(y)\asymp 1$, thus
\begin{equation*}
L_1\asymp \int_{|y|\leq1/2}\frac{dy}{|x-y|^{N-1}}\asymp\int_{|y|\leq1/2}\frac{dy}{|x|^{N-1}}\asymp |x|^{1-N}.
\end{equation*}

For $1/2<|y|\leq|x|/2$, we have by \eqref{eq7} that $(I_1V(y))^2\asymp |y|^{2-2N}$, and $|x-y|\asymp |x|$, thus
\begin{equation*}
L_2\asymp \frac1{|x|^{N-1}}\int_{1/2<|y|\leq|x|/2}{|y|^{2-2N}dy}\asymp \frac{1-|x|^{2-N}}{|x|^{N-1}}.
\end{equation*}

For $|x|/2<|y|\leq2|x|$, we have $(I_1V(y))^2\asymp |y|^{2-2N}\asymp |x|^{2-2N}$, thus
\begin{equation*}
L_3\asymp {|x|^{2-2N}}\int_{|x|/2<|y|\leq2|x|}\frac{dy}{|x-y|^{N-1}}\asymp |x|^{3-2N}.
\end{equation*}

For $|y|>2|x|$, we have $(I_1V(y))^2\asymp |y|^{2-2N}$, and $|x-y|\asymp |y|$, thus
\begin{equation*}
L_4\asymp \int_{|y|>2|x|}\frac{|y|^{2-2N}dy}{|y|^{N-1}}\asymp {|x|^{3-2N}}.
\end{equation*}

Combing the above estimates, we obtain
\begin{align*}
I_1[(I_1V)^2](x)&\asymp L_1+L_2+L_3+L_4\\
&\asymp |x|^{1-N}+\frac{1-|x|^{2-N}}{|x|^{N-1}}+|x|^{3-2N}+{|x|^{3-2N}}\\
&\asymp |x|^{1-N}.
\end{align*}
Thus applying \eqref{eq7}, we obtain for $|x|>1$,
\begin{equation*}
I_1[(I_1V)^2](x)\leq CI_1V(x).
\end{equation*}
Hence, \eqref{eq1} also holds for the case of $N<b$. The proof is complete.
\end{proof}

\begin{lemma}\label{h-negative}\RM\;
If $V(x)=\frac{\omega}{1+|x|^b}$ for some $\omega<0$ and $b>2$, then there exists a
positive solution $h$ to
$$\Delta h=V(x)h.$$
Moreover, $h\asymp1$.
\end{lemma}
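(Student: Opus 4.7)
The plan is to write $V = -|\omega| V_0$ with $V_0(x) = \frac{1}{1+|x|^b}$ and apply the three prior results Lemma \ref{HMV}, Proposition \ref{Zhang-bdd}, and Proposition \ref{Prop-b>2} directly.

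First, I would verify the hypothesis \eqref{ineq-iter} of Lemma \ref{HMV}. Since $I_1$ is linear, $I_1 V = -|\omega|\, I_1 V_0$ and $(I_1 V)^2 = \omega^2 (I_1 V_0)^2$, so by Proposition \ref{Prop-b>2},
\[
I_1[(I_1V)^2](x) = \omega^2\, I_1[(I_1V_0)^2](x) \leq \omega^2 C(N,b)\, I_1 V_0(x) = |\omega|\, C(N,b)\,\bigl(-I_1 V(x)\bigr).
\]
Thus, provided $|\omega|$ is small enough that $|\omega| C(N,b) \leq C_2(N)$ (this smallness is the implicit condition in the statement, matching the analogous smallness in Theorem \ref{thm-Zhang}(3)), the inequality $I_1[(I_1V)^2] \leq -C_2(N) I_1 V$ holds. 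Lemma \ref{HMV} then produces a positive solution $h$ of $\Delta h = V h$ on $\R^N$.

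Second, I would establish the two-sided bound $h \asymp 1$ using the explicit estimate \eqref{h-est}. By Proposition \ref{Zhang-bdd}, $M_0 \defeq \sup_{x} I_2 V_0(x) < \infty$, and since $I_2 V = -|\omega| I_2 V_0 \leq 0$ is bounded, the hypothesis $I_2(-V) < \infty$ of Lemma \ref{HMV} is satisfied. The bounds from \eqref{h-est} then read
\[
1 \;\leq\; \exp\bigl(|\omega|\, I_2 V_0(x)\bigr) \;=\; \exp(-I_2 V(x)) \;\leq\; h(x) \;\leq\; \exp(-C_3 I_2 V(x)) \;=\; \exp\bigl(C_3 |\omega|\, I_2 V_0(x)\bigr) \;\leq\; e^{C_3 |\omega| M_0},
\]
so $h$ is bounded above and below by positive constants, i.e.\ $h \asymp 1$.

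The main obstacle is verifying the nonlinear integral inequality for $V_0$, but that is exactly the content of Proposition \ref{Prop-b>2}, which was just proved; the rest of the argument is bookkeeping in the linearity of Riesz potentials and the smallness condition on $|\omega|$. No further delicate estimates are required.
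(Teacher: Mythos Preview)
Your approach is the same as the paper's: combine Lemma~\ref{HMV} with Proposition~\ref{Prop-b>2} for existence, then Proposition~\ref{Zhang-bdd} together with the estimate \eqref{h-est} for $h\asymp 1$. You are also right to flag the smallness condition on $|\omega|$; the paper's two-line proof invokes the same three ingredients without mentioning it, but since $C_2(N)$ in Lemma~\ref{HMV} is a fixed dimensional constant from \cite{HMV99}, the verification of \eqref{ineq-iter} via Proposition~\ref{Prop-b>2} indeed only goes through when $|\omega|\,C(N,b)\le C_2(N)$, exactly as you note.
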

\begin{proof}
Combining Lemma \ref{HMV} and Proposition \ref{Prop-b>2}, we obtain there exists a positive solution
$h$ to
$$\Delta h=Vh,$$
and by Proposition \ref{Zhang-bdd}, we have
\begin{equation}
\sup_{x\in \mathbb{R}^N}I_2(-V)<\infty.
\end{equation}
Hence, from (\ref{h-est}), we obtain
\begin{equation*}
h\asymp1.
\end{equation*}
\end{proof}

\begin{lemma}\cite[Lemma 2.2]{IK08}
\label{h-est-IK}\RM\;
Assume that $V$ satisfies the following conditions for some $\omega>0$ and $\theta>0$
\begin{enumerate}
\item{$V=V(|x|)\in C^1(\mathbb{R}^N)$, and $V(r)\geq0$ on $[0, \infty)$,}
\item{$\sup\limits_{r\geq1}r^{2+\theta}|V(r)-\frac{\omega}{r^2}|<\infty$,}
\item{$\sup\limits_{r\geq1}|r^3V^{\prime}(r)|<\infty$.}
\end{enumerate}
Then there exists a unique $C^2$ solution $h(r)>0$ to
\begin{equation*}
\Delta h=hV,\quad\mbox{in $\mathbb{R}^N$},
\end{equation*}
such that
\begin{equation}\label{h-est}
h(r)\asymp r^{\alpha(\omega)},\quad \mbox{for large enough $r$}.
\end{equation}
where
$\alpha(\omega)$ is defined as in (\ref{root-1}).
\end{lemma}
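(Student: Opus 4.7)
The plan is to look for a radial solution $h(r)$ of the ODE
\[ h''(r)+\frac{N-1}{r}h'(r)=V(r)h(r),\qquad r>0, \]
with $h'(0)=0$ (so that the radial function $h$ extends $C^2$ to $\mathbb{R}^N$), and to identify its leading asymptotics at infinity with the larger indicial exponent $\alpha(\omega)$ of the limiting Euler equation $h''+\frac{N-1}{r}h'=\frac{\omega}{r^2}h$. Recall that the two roots of $\alpha(\alpha+N-2)=\omega$ are $\alpha(\omega)>0$ and $\beta(\omega):=-(N-2)-\alpha(\omega)<0$.

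First I would construct a regular solution at the origin: writing the equation in divergence form $(r^{N-1}h')'=r^{N-1}Vh$ and solving the Volterra integral equation
\[ h(r)=1+\int_0^r s^{1-N}\int_0^s t^{N-1}V(t)h(t)\,dt\,ds \]
by Picard iteration (allowed because $V\in C^1$ is bounded near $0$) yields a unique local $C^2$ solution with $h(0)=1$, $h'(0)=0$. Because $V\ge 0$, we have $(r^{N-1}h')'\ge 0$ whenever $h\ge 0$, which forces $h'\ge 0$ and hence $h\ge 1$ on the maximal interval of existence; linearity then extends $h$ to all of $[0,\infty)$.

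Next I would analyze the behavior at infinity via the Emden--Fowler transformation $s=\log r$, $\widetilde y(s)=(h(e^s),\,e^s h'(e^s))^{T}$. The ODE becomes the first-order linear system
\[ \frac{d\widetilde y}{ds}=\begin{pmatrix}0 & 1\\ \omega & -(N-2)\end{pmatrix}\widetilde y + B(s)\widetilde y, \]
where the constant matrix has eigenvalues $\alpha(\omega)$ and $\beta(\omega)$, and the perturbation $B(s)$, built from $r^2V(r)-\omega$, satisfies $\|B(s)\|\le Ce^{-\theta s}$ by assumption (2). By Levinson's asymptotic theorem (or a direct contraction-mapping argument in a weighted exponential norm on $[S,\infty)$), there exist two linearly independent solutions with asymptotics $r^{\alpha(\omega)}$ and $r^{\beta(\omega)}$, so every solution admits a decomposition
\[ h(r)=c_1 r^{\alpha(\omega)}(1+o(1))+c_2 r^{\beta(\omega)}(1+o(1)),\qquad r\to\infty. \]
Applied to the regular solution from the previous step, $h\ge 1$ on $[0,\infty)$ excludes $c_1=0$ (otherwise $h\to 0$ since $\beta(\omega)<0$), and positivity forces $c_1>0$, yielding $h\asymp r^{\alpha(\omega)}$. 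Uniqueness up to a positive multiplicative constant — the only meaningful notion for a linear equation — follows from the same decomposition, since the subspace of solutions asymptotic to $r^{\alpha(\omega)}$ is one-dimensional.

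I expect the main technical obstacle to be the rigorous justification of the Levinson-type decomposition with only an $O(e^{-\theta s})$ decay of the perturbation: one must set up the fixed point so that both asymptotic modes are separated cleanly, and here assumption (3) on $r^{3}V'(r)$ should enter, controlling $B'(s)$ in order to close the contraction and to upgrade the bare decomposition to the sharper $h(r)/r^{\alpha(\omega)}\to c>0$. The remaining bookkeeping (regularity across $r=0$, reverting from the Emden--Fowler variable, translating the ODE conclusion back to the PDE $\Delta h=Vh$ on $\mathbb{R}^N$) is routine.
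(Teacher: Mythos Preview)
The paper does not supply its own proof of this lemma: it is quoted verbatim from \cite[Lemma~2.2]{IK08} and invoked as a black box in Section~\ref{example}. There is therefore no in-paper argument to compare your proposal against.

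That said, your sketch is a reasonable outline of how such a result is typically established. The radial reduction, the Volterra construction of the regular solution at $r=0$, the monotonicity argument from $V\ge 0$, and the Emden--Fowler/Levinson analysis at infinity are all standard and in the right spirit. Two small comments. First, the lemma as stated asserts a \emph{unique} positive $C^2$ solution, not merely uniqueness up to scalar; as you correctly flag, for a linear homogeneous equation this only makes sense after a normalization (in \cite{IK08} the solution is pinned down by a specific boundary condition at infinity), so the word ``unique'' here should be read modulo that convention. Second, you are right to be cautious about where hypothesis~(3) on $r^3V'(r)$ enters: in the Levinson-type argument the $L^1$ decay from hypothesis~(2) alone already separates the two modes, and (3) is used in \cite{IK08} to obtain sharper asymptotics for $h'$ as well (needed for their heat-kernel estimates), rather than merely for $h\asymp r^{\alpha(\omega)}$. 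For the purposes of the present paper only the two-sided bound on $h$ is used, so your sketch covers what is actually needed here.
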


\begin{example}\RM
Let $V(x)=0$, and  $M=\mathbb{R}_{g}^k\times \mathbb{S}^l$ be endowed with product metric.
 Here $\mathbb{R}_g^k=(\mathbb{R}^k, g)$ is a model manifold with induced metric
$g=dr^2+\psi(r)^2d\theta^2$, where $(r,\theta)$ is the polar coordinates in $\mathbb{R}^k$, and
$\psi(r)$ is a smooth, positive  function on $(0,\infty)$ such that
\begin{equation*}
\psi(r)=\left\{
\begin{array}{ll}
r, \quad\mbox{for small $r$}, \\
\left(r^{\alpha-1}\ln^{\frac{\alpha}{2}}r\right)^{\frac{1}{k-1}},\quad\mbox{for large $r$}.
\end{array}
\right.
\end{equation*}
If $V(x)=0$, we could choose $h=1$, and hence, in (\ref{nu-meas}) $d\nu=d\mu=d\mu_0$.
Then the volume of the ball $B_r:=B_r(0)$ in $\mathbb{R}_g^k$ can be determined by
\begin{eqnarray*}
\mu_0(B_r)=\int_0^r S(\tau)d\tau,
\end{eqnarray*}
where
$S$ is the surface area defined by
\begin{equation*}
S(r)=\left\{
\begin{array}{ll}
r^{k-1}, \quad\mbox{for small $r$}, \\
r^{\alpha-1}\ln^{\frac{\alpha}{2}} r,\quad\mbox{for large $r$}.
\end{array}
\right.
\end{equation*}
Hence, we obtain
\begin{eqnarray*}
\mu_0(B_r)\leq Cr^{\alpha}\ln^{\frac{\alpha}{2}}r,\quad\mbox{for large enough $r$}.
\end{eqnarray*}
If follows that the geodesic ball $B(0, r)$ in M satisfies
\begin{eqnarray*}
\mu_0(B(0, r))\leq Cr^{\alpha}\ln^{\frac{\alpha}{2}}r, \quad\mbox{for large enough $r$}.
\end{eqnarray*}
Applying Corollary \ref{cor:2}, we derive that when $p\leq 1+\frac{2}{\alpha}$, then
 (\ref{eq-pra}) on $\mathbb{R}_g^k\times \mathbb{S}^l$ admits no global positive solution. Especially, when $\mathbb{R}_{g}^k=\mathbb{R}^k$, we know that the critical exponent
for $\mathbb{R}^k\times\mathbb{S}^l$ is
$1+\frac{2}{k}$.
\end{example}

\begin{example}\RM
When $M=\mathbb{R}^N$, we consider the following classes of $V(x)$.
\begin{enumerate}
\item{If $0\leq V(x)\leq \frac{\omega}{1+|x|^b}$ for some $b>2$ and $\omega>0$, we know
by Proposition \ref{Zhang-bdd}
\begin{equation*}
\sup_{x\in \mathbb{R}^N}I_2V<\infty,
\end{equation*}
which means that $V(x)$ is Green bounded. By Lemma \ref{h-exist}, we know that
\begin{equation*}
\Delta h=Vh,
\end{equation*}
admits a solution
$h\asymp1$.
Noting that
\begin{equation}
\nu(B(0,r))=\int_{B(0, r)}hdx\asymp r^{N}.
\end{equation}
By Theorem \ref{thm:1}, we know if
$$\nu(B(0, r))\leq Cr^{P}\ln^Qr,$$
or more precisely, when
$$p\leq 1+\frac{2}{N}.$$
there exists no global solution to (\ref{eq-pra}).

This result also covers the result (1) of Theorem \ref{thm-Zhang}.
}

\item[(2)]{When $0\leq V(x)\leq\frac{\omega}{|x|^2}$, for large $|x|$, by employing Comparison principle, we can replace $V(x)$ by $\frac{\omega}{|x|^2}(1+|x|^{-\theta})$ for large $|x|$ still denoted by $V(x)$.
    If we can show that (\ref{eq-pra}) admits no global positive solution with
    $V(x)=\frac{\omega}{|x|^2}(1+|x|^{-\theta})$ for large $|x|$, then the original problems admits
    no global positive solution by Comparison principle.

    Applying Lemma \ref{h-est-IK}, we know the following problem with $V(x)=\frac{\omega}{|x|^2}(1+|x|^{-\theta})$ for large $|x|$
\begin{equation}
\Delta h=Vh, \quad\mbox{in $\mathbb{R}^N$},
\end{equation}
admits a unique solution $h>0$ such that
\begin{equation}\label{h-app-1}
h(x)\asymp|x|^{\alpha(\omega)},\mbox{for large $|x|$}.
\end{equation}
where $\alpha(\omega)$ is defined as in (\ref{root-1}).

For large $r$, we obtain that
\begin{equation}\label{vol-h}
\nu(B(0, r))=\int_{B(0, r)}hdx\asymp r^{N+\alpha(\omega)},\quad\mbox{for large $r$}.
\end{equation}
Applying Theorem \ref{thm:1}, we obtain that
when $p\leq1+\frac{2}{N+\alpha(\omega)}$, there exists no global positive solution to (\ref{eq-pra}).

In this case, the result is also in accordance with the (2) in Theorem \ref{thm-Ishige}.
}

\item[(3)]{When $\frac{\omega}{1+|x|^b}\leq V(x)\leq0$ for some $\omega<0$, and $b>2$.
By Lemma \ref{h-negative}, we obtain that $h\asymp1$. Applying Theorem \ref{thm:1}, we obtain that
when $p\leq 1+\frac{2}{N}$, there exists no global positive solution to (\ref{eq-pra}).

In this case, the result is also in accordance with the (3) in Theorem \ref{thm-Zhang}. Actually, we remove the restriction that $\omega$ is small enough, and we improve the result obtained in Theorem \ref{thm-Zhang}.}

\item[(4)]{ When $V=
    \frac{\alpha(\omega)N+\omega|x|^2}{(1+|x|^2)^2}$ for $\omega\in[-\frac{(N-2)^2}{4},0)$, we know
$$V(x)\leq \frac{\omega}{1+|x|^2},$$
and $\Delta h=Vh$ admits a solution
 $h(x)=(1+|x|^2)^{\frac{\alpha(\omega)}{2}}$.
    By Theorem \ref{thm:1}, we know if $p\leq1+\frac{2}{N+\alpha(\omega)}$, there exists no global positive solution to (\ref{eq-pra}).
    }
\end{enumerate}
\end{example}
\begin{remark}\RM
Here we can not cover the case of $V(x)\leq\frac{\omega}{1+|x|^2}\leq0$, the difficulty is that we
do not know the
asymptotic behavior of $h$ when $|x|\to\infty$. However, we conjecture that when $V(x)$ behaves like $\frac{\omega}{1+|x|^2}$, $\Delta h=Vh$ admits a solution $h\asymp|x|^{\alpha(\omega)}$
for large $|x|$.
\end{remark}

\begin{example}\RM
Assume that $M$ satisfies
\begin{equation}
\mu_0(B(x_0, r))\leq Cr^{\alpha},\quad\mbox{for large enough $r$},
\end{equation}
and
\begin{equation*}
G(x,y)\asymp d(x,y)^{2-\alpha},\quad\mbox{for large enough $d(x,y)$}.
\end{equation*}
Let $V(x)\asymp \frac{\omega}{1+|x|^b}$ for $b>2$, and $\omega>0$, we know
\begin{equation}
\sup_{x}\int_{M}G(x,y)V(y)dy<\infty,
\end{equation}
hence $V(x)$ is Green bounded. Hence by Lemma \ref{h-exist}, we know there exists a function
$h(x)\asymp1$ satisfying $\Delta h=Vh$. Applying Theorem \ref{thm:1}, we know if
\begin{equation}
p\leq 1+\frac{2}{\alpha}
\end{equation}
then there is no positive global solution to (\ref{eq-pra}).
\end{example}

\section{Nonexistence of global positive solution}\label{sec-blow-up}

\bigskip

\begin{proof}[Proof of Theorem \ref{thm:1}]\RM
Let $\varphi \in W_{c}^{1, 2}(M\times[0, +\infty ), d\mu dt)$ be a function satisfying $0\le \varphi \le 1$, $\varphi \equiv 1$ in a neighborhood of $D_R:=\overline{B(x_0, R)}\times[0, {R^2}]$. Define
\begin{eqnarray}\label{def-test}
\psi (x, t) = v(x,t)^{-a}\varphi(x, t){^b},
\end{eqnarray}
where $a$ will take arbitrarily small positive value near zero, and $b$ will be chosen to be a large enough fixed constant.

Without loss of generality, let us assume that $1/v$ is locally bounded, otherwise we can replace $v$ by $v+\varepsilon$ for $\varepsilon>0$, at last we can let $\varepsilon\to0_{+}$. From (\ref{def-test}), we know
that $\psi$ has compact support and is bounded.
Note that
\begin{eqnarray}\label{eq:grad_psi}
\nabla \psi = b v^{ - a }\varphi ^{b - 1}\nabla \varphi - a v^{ - a - 1}\varphi ^b \nabla v,
\end{eqnarray}
and
\begin{eqnarray}\label{eq:partial_psi}
{\partial _t}\psi = b v^{ - a }\varphi ^{b - 1}\partial _t\varphi - a v^{ - a - 1}\varphi ^b \partial_tv.
\end{eqnarray}
Thus
$$\psi \in W_{c}^{1, 2}(M\times[0, +\infty ), d\mu dt).$$
Substituting (\ref{eq:grad_psi}) into (\ref{def: weak-sol-ineq}),  we obtain
\begin{eqnarray}\label{eq:the_second_inequality}
\int_0^\infty \int_M h^{p-1}v^p\psi d\mu dt &+& a\int_0^\infty \int_M v^{-a - 1}\left| {\nabla v} \right|^2\varphi ^b d\mu dt\nonumber\\
&\leq& \int_0^\infty \int_M (\nabla v, b v^{ - a}\varphi ^{b - 1}\nabla \varphi )d\mu dt - \int_0^\infty \int_M v\partial _t\psi d\mu dt .
\end{eqnarray}
Applying the Young's inequality to the first term in the right-hand side of (\ref{eq:the_second_inequality}), we obtain
\begin{eqnarray*}
&&\int_0^\infty \int_M (\nabla v, b v^{-a}\varphi^{b - 1}\nabla \varphi)d\mu dt  \nonumber\\
&=& \int_0^\infty \int_M (a^{\frac{1}{2}}v^{\frac{- a-1}{2}}\varphi^{\frac{b}{2}}\nabla v, ba^{-\frac{1}{2}}
v^{\frac{1-a}{2}}\varphi^{\frac{b}{2}-1}\nabla\varphi )d\mu dt \nonumber\\
&\leq& \frac{a}{2}\int_0^{\infty}\int_M v^{-a-1}\left|\nabla v \right|^2\varphi^{b}d\mu dt+
\frac{b^2}{2a}\int_0^\infty\int_{M} v^{1-a}\varphi^{b-2}\left|\nabla \varphi\right|^2d\mu dt.
\end{eqnarray*}
Substituting the above into (\ref{eq:the_second_inequality}), we obtain
\begin{eqnarray}\label{right-12}
&& \int_0^\infty \int_M h^{p-1}v^p\psi d\mu dt + \frac{a}{2}\int_0^\infty \int_M v^{-a-1}\left|\nabla v\right|^2\varphi^{b}d\mu dt \nonumber \\
&\leq&\frac{b^2}{2a}\int_0^\infty \int_M v^{1-a}\varphi^{b - 2}\left|\nabla\varphi \right|^2d\mu dt
- \int_0^\infty \int_M v\partial_t\psi d\mu dt.
\end{eqnarray}
Combining (\ref{right-12}) with (\ref{eq:partial_psi}), we obtain
\begin{eqnarray*}
 \int_0^\infty \int_M v^{-a-1}\left|\nabla v \right|^2\varphi^b d\mu dt
&\leq& \frac{b^2}{a^2}\int_0^\infty\int_{M} v^{1-a}\varphi^{b - 2}\left|\nabla \varphi \right|^2d\mu dt\nonumber\\
&&- \frac{2}{a}\int_0^\infty \int_M v[b v^{-a }\varphi^{b - 1}\partial_t\varphi - a v^{-a-1}\varphi^b\partial_tv]d\mu dt,
\end{eqnarray*}
which is
\begin{eqnarray}\label{eq:estimate_of_grad_u}
\int_0^\infty \int_M v^{-a-1}\left|\nabla v\right|^2\varphi^{b}d\mu dt
&\leq&\frac{b^2}{a^2}\int_0^\infty \int_M v^{1-a}\varphi^{b-2}\left|\nabla \varphi\right|^2d\mu dt \nonumber \\
&&- \frac{2}{a}\int_0^{\infty}\int_M (b v^{1-a}\varphi^{b-1}\partial_t\varphi - a v^{-a}\varphi^{b}\partial_tv)d\mu dt.
\end{eqnarray}
Let us use another feasible test function $\psi(x, t) =\varphi(x, t)^{b}$.
Substituting $\psi =\varphi^{b}$ into (\ref{def: weak-sol-ineq}), we obtain
\begin{eqnarray}\label{eq:the_third_inequality}
\int_0^\infty \int_M h^{p-1}v^p\varphi^{b} d\mu dt\leq \int_0^{\infty} \int_M (\nabla v, b\varphi^{b - 1}\nabla \varphi)d\mu dt - b\int_0^\infty\int_M  v\varphi^{b - 1}\partial_t\varphi d\mu dt.
\end{eqnarray}
Let us estimate the first term in the right-hand side of (\ref{eq:the_third_inequality}) via  the Young's inequality
\begin{eqnarray*}
&&\int_0^\infty \int_M (\nabla v, b\varphi^{b-1}\nabla\varphi)d\mu dt\nonumber\\
&=&\int_0^\infty\int_M (a^{\frac{1}{2}}v^{\frac{-a- 1}{2}}\varphi^{\frac{b}{2}}\nabla v, ba^{-\frac{1}{2}}
v^{\frac{a+ 1}{2}}\varphi^{\frac{b}{2}-1}\nabla \varphi)d\mu dt \nonumber\\
&\leq& \frac{a}{2}\int_0^{\infty} \int_Mv^{-a-1}\left|\nabla v\right|^2\varphi^{b}d\mu dt+
\frac{b^2}{2a}\int_0^{\infty}\int_M v^{a+1}\varphi^{b-2}\left|\nabla\varphi\right|^2d\mu dt.
\end{eqnarray*}
Combining the above with (\ref{eq:the_third_inequality}), we obtain
\begin{eqnarray*}
\int_0^{\infty}\int_{M} h^{p-1}v^p\varphi^{b}d\mu dt &\leq&
\frac{a}{2}\int_0^{\infty}\int_{M}v^{-a-1}\left|\nabla v\right|^2
\varphi^{b}d\mu dt \nonumber\\
&& + \frac{b^2}{2a}\int_0^{\infty}\int_{M}v^{a+1}\varphi^{b-2}\left|\nabla\varphi\right|^2d\mu dt
-b\int_0^{\infty}\int_{M} v\varphi^{b-1}\partial_t\varphi d\mu dt.
\end{eqnarray*}
Substituting (\ref{eq:estimate_of_grad_u}) into the above, we obtain
\begin{eqnarray}\label{eq:estimate_of_I_by_four_K}
\int_0^{\infty}\int_M h^{p-1}v^p\varphi^{b}d\mu dt &\leq& \frac{b^2}{2a}\int_0^\infty
\int_M v^{1-a}\varphi^{b-2}\left|\nabla\varphi\right|^2d\mu dt\nonumber \\
&& - \int_0^{\infty}\int_M (b v^{1-a}\varphi^{b-1}\partial_t\varphi-a v^{-a}\varphi^{b}\partial_tv )d\mu dt \nonumber\\
&& + \frac{b^2}{2a}\int_0^\infty\int_M v^{a + 1}\varphi^{b-2}\left|\nabla\varphi \right|^2d\mu dt \nonumber \\
&&-b\int_0^{\infty}\int_M v\varphi^{b-1}\partial_t\varphi d\mu dt.
\end{eqnarray}
For convenience, let us denote
\begin{eqnarray*}
I&:=&\int_0^{\infty}\int_M h^{p-1}v^p\varphi^{b}d\mu dt,\\
K_1&:=&\frac{b^2}{2a}\int_0^\infty
\int_M v^{1-a}\varphi^{b-2}\left|\nabla\varphi\right|^2d\mu dt,\\
K_2&:=& - \int_0^{\infty}\int_M (b v^{1-a}\varphi^{b-1}\partial_t\varphi-a v^{-a}\varphi^{b}\partial_tv )d\mu dt,\\
K_3&:=& \frac{b^2}{2a}\int_0^\infty\int_M v^{a + 1}\varphi^{b-2}\left|\nabla\varphi \right|^2d\mu dt,\\
K_4&:=&-b\int_0^{\infty}\int_M v\varphi^{b-1}\partial_t\varphi d\mu dt.
\end{eqnarray*}
Then (\ref{eq:estimate_of_I_by_four_K}) can be written as follows
\begin{eqnarray}\label{IK}
I\leq K_1+K_2+K_3+K_4.
\end{eqnarray}
Before estimating (\ref{IK}), let us introduce some notations
\begin{eqnarray}\label{eq:definition_IJQ}
J(\theta_1,\theta_2 ) := \int_{0}^{\infty }\int_{M}
h^{\theta_1}\left|\nabla\varphi\right|^{\theta_2}d\nu dt,
 \quad
L(\theta_1,\theta_2 ) := \int_{0}^{\infty }\int_{M}h^{\theta_1}
\left|\partial_t \varphi\right|^{\theta_2}d\nu dt.
\end{eqnarray}
Noting $\varphi \equiv 1$ in a neighborhood of $D_R$, and applying the H\"{o}lder's inequality,
we obtain
\begin{eqnarray*}
K_1&=& \frac{b^2}{2a}\iint_{D_R^c} v^{1-a}\varphi^{b-2}\left|\nabla\varphi\right|^2
d\mu dt \nonumber\\
&=& \frac{b^2}{2a}\iint_{D_R^c}
(h^{\frac{(p-1)(1-a)}{p}}v^{1-a}\varphi^{\frac{b(1-a)}{p}})(h^{-\frac{(p-1)(1-a)}{p}}
\varphi^{-\frac{b(1-a)}{p} +b-2}\left|\nabla\varphi\right|^2)d\mu dt \nonumber\\
&\leq& \frac{b^2}{2a}\left(\iint_{D_R^c}h^{p-1}v^p\varphi^{b}d\mu dt\right)^{\frac{1-a}{p}}\nonumber\\
&&\times\left(\int_0^{\infty} \int_M h^{-\frac{(p-1)(1-a)}{p+a-1}}\varphi^{[-\frac{b(1-a)}{p}+b-2]\frac{p}{p+a-1}}\left|\nabla\varphi\right|^{\frac{2p}{p+a-1}}
d\mu dt\right)^{\frac{p+a
-1}{p}}\nonumber\\
&\leq&\frac{b^2}{2a}\left(\iint_{D_R^c}h^{p-1}v^p\varphi^{b}d\mu dt\right)^{\frac{1-a}{p}}
\nonumber\\
&&\times\left(\int_0^{\infty} \int_M h^{\frac{ap}{p+a-1}}\varphi^{[-\frac{b(1-a)}{p}+b-2]\frac{p}{p+a-1}}\left|\nabla\varphi\right|^{\frac{2p}{p+a-1}}
d\nu dt\right)^{\frac{p+a
-1}{p}}.
\end{eqnarray*}
Here we have used that $D_R^c=M\times[0,\infty)\setminus D_R$, and
$d\nu=h^{-1}d\mu$.

Noting that $0\leq\varphi \leq 1$, and by choosing sufficiently large $b$, we obtain
\begin{eqnarray}\label{eq:estimate_K1}
{K_1} \leq \frac{C}{a}\left(\iint_{D_R^c}h^{p-1}v^p\varphi^{b}d\mu dt\right)^{\frac{1-a}{p}}
J\left(\frac{ap}{p+a-1},
\frac{2p}{p+a-1}\right)^{\frac{p+a-1}{p}}.
\end{eqnarray}
Applying integration by parts to $K_2$, we obtain
\begin{eqnarray*}
K_2&=&-\int_0^{\infty}\int_M(b v^{1-a}\varphi^{b-1}\partial_t\varphi-a v^{-a}\varphi^{b}\partial_tv )d\mu dt\nonumber\\
&=&-\int_0^{\infty}\int_M v^{1-a}\partial_t (\varphi^{b})-\frac{a}{1-a}\partial_t(v^{1-a})\varphi^{b} d\mu dt\nonumber\\
&=& - \int_0^\infty \int_M v^{1-a}\partial_t (\varphi^b)d\mu dt -
\frac{a}{1- a}\int_M v_0^{1-a}\varphi(x,0)^b d\mu\nonumber\\
&& - \frac{a}{1-a}\int_0^{\infty}
\int_M v^{1-a}\partial_t (\varphi^b)d\mu dt \nonumber\\
&\leq& -\frac{1}{1-a}\int_0^\infty \int_M v^{1-a}\partial_t(\varphi^{b})d\mu dt \nonumber\\
&=& -\frac{b}{1-a}\iint_{D_R^c} v^{1-a}\varphi^{b - 1}\partial_t\varphi d\mu dt.
\end{eqnarray*}
Using H\"{o}lder's inequality again and by similar arguments as in $K_1$, we obtain
\begin{eqnarray}\label{eq:estimate_K2}
K_2&\leq & \frac{b}{1- a}\iint_{D_R^c} (h^{\frac{(p-1)(1-a)}{p}}v^{1-a}\varphi^{\frac{1-a}{p}b})
(h^{-\frac{(p-1)(1-a)}{p}}\varphi^{b - 1-\frac{1- a}{p}b}\left|\partial_t\varphi \right|)d\mu dt\nonumber\\
&\leq& \frac{b}{1-a}\left(\iint_{D_R^c}h^{p-1} v^p\varphi^{b}d\mu dt\right)^{\frac{1-a}{p}}\nonumber\\
&&\times\left(\int_0^{\infty} \int_M
h^{-\frac{(p-1)(1-a)}{p+a-1}}\varphi^{[ b-1- \frac{1-a}{p}b]
\frac{p}{p+a-1}}\left|\partial_t\varphi\right|^{\frac{p}{p+a-1}}d\mu dt\right)^{\frac{p+a-1}{p}} \nonumber\\
&\leq&\frac{C}{1-a}\left(\iint_{D_R^c} h^{p-1}v^p\varphi^{b}d\mu dt\right)^{\frac{1-a}{p}}
L\left(\frac{ap}{p+a-1}, \frac{p}{p+a- 1}\right)^{\frac{p+a-1}{p}}.
\end{eqnarray}
Similarly, we obtain
\begin{eqnarray}\label{eq:estimate_K3}
K_3 &=& \frac{b^2}{2a}\iint_{D_R^c}v^{a + 1}\varphi^{b - 2}\left|\nabla\varphi\right|^2d\mu dt \nonumber\\
&=& \frac{b^2}{2a}\iint_{D_R^c}(h^{\frac{(p-1)(a+1)}{p}}v^{a + 1}\varphi^{\frac{a+1}{p}b})
(h^{-\frac{(p-1)(a+1)}{p}}\varphi^{b - 2-\frac{a+ 1}{p}b}\left|\nabla \varphi\right|^2)d\mu dt \nonumber\\
&\leq&\frac{b^2}{2a}\left(\iint_{D_R^c}h^{p-1}v^p\varphi^b d\mu dt\right)^{\frac{a+1}{p}}\nonumber\\
&&\times\left(\int_0^\infty\int_M h^{-\frac{(p-1)(a+1)}{p-a-1}}\varphi^{[b-2 - \frac{a + 1}{p}b]\frac{p}{p- a - 1}}\left|\nabla\varphi\right|^{\frac{2p}{p-a-1}} d\mu dt\right)^{\frac{p-a-1}{p}} \nonumber \\
&\leq& \frac{C}{a}\left(\iint_{D_R^c} h^{p-1}v^p\varphi^b d\mu dt\right)^{\frac{a + 1}{p}}
J\left(-\frac{ap}{p-a-1}, \frac{2p}{p - a - 1}\right)^{\frac{p-a - 1}{p}},
\end{eqnarray}
and
\begin{eqnarray}\label{eq:estimate_K4}
K_4 &=& - b\iint_{D_R^c}v\varphi^{b - 1}\partial_t\varphi d\mu dt \nonumber\\
&\leq& b\iint_{D_R^c} (h^{\frac{p-1}{p}}v\varphi^{\frac{b}{p}})(h^{-\frac{p-1}{p}}\varphi^{b-1-\frac{b}{p}}\left| \partial_t\varphi \right|) d\mu dt\nonumber\\
&\leq& b
\left(\iint_{D_R^c} h^{p-1}v^p\varphi^b d\mu dt \right)^{\frac{1}{p}}
\left(\int_0^\infty\int_M h^{-1}\varphi ^{[b-1 - \frac{b}{p}]\frac{p}{{p - 1}}}\left|\partial_t\varphi\right|^{\frac{p}{p - 1}}
d\mu dt\right)^{\frac{p - 1}{p}} \nonumber\\
&\leq& C\left(\iint_{D_R^c} h^{p-1}v^p\varphi^b d\mu dt
\right)^{\frac{1}{p}}L\left(0,\frac{p}{p - 1}\right)^{\frac{p - 1}{p}}.
\end{eqnarray}
Substituting (\ref{eq:estimate_K1}), (\ref{eq:estimate_K2}), (\ref{eq:estimate_K3}) and (\ref{eq:estimate_K4}) into (\ref{eq:estimate_of_I_by_four_K}), we obtain
\begin{eqnarray}\label{eq:estimate_I_by_K1234}
I &\leq& \frac{C}{a}\left(\iint_{D_R^c} h^{p-1}v^p\varphi^b d\mu dt\right)^{\frac{1-a}{p}}
J\left(\frac{ap}{p+a-1}, \frac{2p}{p+a
 - 1}\right)^{\frac{p+a - 1}{p}} \nonumber \\
&&+ \frac{C}{1-a}
\left(\iint_{D_R^c}h^{p-1}v^p\varphi^b d\mu dt\right)^{\frac{1-a}{p}}
L\left(\frac{ap}{p+a-1},\frac{p}{p+a - 1}\right)
^{\frac{p+a-1}{p}} \nonumber \\
&&+ \frac{C}{a}
\left(\iint_{D_R^c}h^{p-1}v^p\varphi^b d\mu dt\right)^{\frac{a + 1}{p}}
J\left(-\frac{ap}{p-a-1}, \frac{2p}{p - a - 1}\right)^{\frac{p - a - 1}{p}} \nonumber \\
&&+ C\left(\iint_{D_R^c}h^{p-1}v^p\varphi^b d\mu dt\right)^{\frac{1}{p}}L\left(0,\frac{p}{p - 1}\right)^{\frac{p - 1}{p}},
\end{eqnarray}
which is
\begin{eqnarray}\label{eq:estimate_I_by_K1234_fill_hole}
I &\leq& \frac{C}{a}I^{\frac{1-a}{p}}J\left(\frac{ap}{p+a-1}, \frac{2p}{p-1+a}\right)^{\frac{p-1+a}{p}} +
\frac{C}{1-a}I^{\frac{1-a}{p}}L\left(\frac{ap}{p+a-1}, \frac{p}{p-1+a}\right)^{\frac{p-1+a}{p}} \nonumber \\
&&+\frac{C}{a}I^{\frac{1+a}{p}}
J\left(-\frac{ap}{p-a-1},\frac{2p}{p-1-a}\right)^{\frac{p-1-a}{p}}+CI^{\frac{1}{p}}
L\left(0,\frac{p}{p-1}\right)^{\frac{p-1}{p}}.
\end{eqnarray}
We claim that there exists a constant $C_0>0$ such that
\begin{equation}\label{integral-bound}
\int_{0}^{\infty }\int_{M}h^{p-1}v^{p}d\mu dt \leq C_0 < \infty.
\end{equation}
We divide the proof into two cases:

\textbf{Case 1:} if
\begin{eqnarray*}
\int_0^{\infty}\int_{M}h^{p-1}v^pd\mu dt\leq1,
\end{eqnarray*}
then we let $C_0=1$, and it follow that (\ref{integral-bound}) is  true.

\textbf{Case 2:} If Case 1 is not satisfied, then we obtain
\begin{eqnarray*}
\int_0^{\infty}\int_{M}h^{p-1}v^pd\mu dt>1,
\end{eqnarray*}
Hence, we can find a large enough $R$ such that
\begin{eqnarray}\label{large1}
\iint_{D_R}h^{p-1}v^pd\mu dt>1.
\end{eqnarray}
Recall $p>1$, and choose a positive constant $\beta$ satisfying
$$\frac{{1 + \beta }}{p} < 1.$$
Let $a$ satisfy $0<a\ll\min\{1, \beta\} $. Combining (\ref{eq:estimate_I_by_K1234}) and (\ref{large1}), we obtain
 \begin{eqnarray*}
 I &\leq& CI^{\frac{1 + \beta}{p}}
 \left[ \frac{1}{a }J\left(\frac{ap}{p+a-1},\frac{2p}{p+a - 1}\right)^{\frac{p+a-1}{p}}+L\left(\frac{ap}{p+a-1},
 \frac{p}{p+a-1}\right)^{\frac{p+a-1}{p}}\right.\nonumber\\
  && \left.+ \frac{1}{a }J\left(-\frac{ap}{p-a-1}\frac{2p}{p-a-1}\right)^{\frac{p - a - 1 }{p}}
  + L\left(0,\frac{p}{p - 1}\right)^{\frac{p - 1}{p}} \right].
 \end{eqnarray*}
 It follows that
\begin{eqnarray}\label{eq:estimate_I_by_JQ}
I^{1 - \frac{1 + \beta }{p}} &\leq& C \left[ \frac{1}{a }
J\left(\frac{ap}{p+a-1},\frac{2p}{p+a - 1}
\right)^{\frac{p+a - 1}{p}} + L\left(\frac{ap}{p+a-1},\frac{p}{p+a - 1}\right)^{\frac{p+a - 1}{p}} \right. \nonumber\\
&& + \left. \frac{1}{a }J\left(-\frac{ap}{p-a-1},\frac{2p}{p - a - 1}\right)^{\frac{p - a - 1}{p}} + L\left(0, \frac{p}{p - 1}\right)^{\frac{p - 1}{p}} \right].
\end{eqnarray}
Let $g\in {{C}^{\infty }}[0, \infty )$ be a nonnegative function satisfying
\begin{center}
$g(t)=1$ on $\left[ 0, 1 \right]$;\quad $g(t)=0$ on $\left[ 2, \infty \right)$;\quad $\left| g^{\prime} \right|\leq C_1<\infty$.	
\end{center}
Let $\{\eta_{k} \}_{k\in \mathbb{N}}, \{\gamma_{k}\}_{k\in \mathbb{N}}\in C^{\infty }[0, \infty )$ be two sequences of functions defined respectively by
\begin{eqnarray}\label{eq:definition_eta_k}
\eta _k(t) = g\left(\frac{t}{2^{2k}}\right),
\end{eqnarray}
and
\begin{eqnarray}\label{eq:definition_gamma_k}
\gamma_k(x) = g\left(\frac{r(x)}{2^k}\right),
\end{eqnarray}
where $r(x)=d({x_0}, x)$.

From (\ref{eq:definition_eta_k}) and (\ref{eq:definition_gamma_k}), we have
\begin{eqnarray}\label{eq:partial_t_eta_k}
\left| {\partial_t {{\eta }_{k}}} \right| \left\{
\begin{array}{ll}
\le \frac{C}{{{2}^{2k}}}, \quad t\in [{{2}^{2k}}, {{2}^{2k+1}}], \\
=0, \quad\text{otherwise},
\end{array}\right.
\end{eqnarray}
and
\begin{eqnarray}\label{eq:grad_gamma_k}
\left| \nabla {{\gamma }_{k}} \right| \left\{
\begin{array}{ll}
\leq \frac{C}{2^{k}}, \ x\in B(x_0, 2^{k+1})\setminus B(x_0,2^k), \\
=0, \quad\text{otherwise}.
\end{array}\right.
\end{eqnarray}
Let us define a sequence of functions $\{\varphi_{i}(x,t)\}_{i\in\mathbb{N}}$ by
\begin{eqnarray}\label{eq:test_function_varphi_i}
\varphi_{i}(x,t)=\frac{1}{i}\sum\limits_{k=i+1}^{2i}{{{\eta }_{k}}(t){{\gamma }_{k}}(x)},
\end{eqnarray}
It follows that $\varphi_i(x, t)=1$ when $(x, t)\in \overline{B(x_0, 2^{i})}\times[0, 2^{2i}]$.
Moreover, for distinct $k$, noting that $\text{supp}({\partial_t {{\eta }_{k}}})$ and $\text{supp}(\nabla {{\gamma }_{k}})$ are disjoint respectively,
we obtain for any $\theta>0$
\begin{eqnarray}\label{eq:partial_t_test_function_varphi_i}
\left| {\partial_t {{\varphi }_{i}}} \right|^{\theta }=i^{-\theta }\sum\limits_{k=i+1}^{2i}\left| \gamma_{k}\partial_t (\eta_{k})\right|^{\theta }.
\end{eqnarray}
and
\begin{eqnarray}\label{eq:grad_test_function_varphi_i}
\left| \nabla \varphi_{i} \right|^{\theta }=i^{-\theta }\sum\limits_{k=i+1}^{2i}\left| \eta_{k}\nabla \gamma_{k} \right|^{\theta}.
\end{eqnarray}
Hence
\begin{eqnarray*}
{\varphi _i} \in W_c^{1, 2}(M\times [0, + \infty )).
\end{eqnarray*}
Let
\begin{eqnarray}\label{def-alpha-index}
a=\frac{1}{i}.
\end{eqnarray}
Substituting the above with $\varphi=\varphi_i$ into (\ref{eq:estimate_I_by_JQ}), we obtain
\begin{eqnarray}\label{eq:estimate_I_by_JQ_change_alpha_to_i}
I^{1 - \frac{1+\beta}{p}} &\leq& C\left[iJ\left(\frac{p/i}{p+1/i - 1},\frac{2p}{p+1/i - 1}\right)^{\frac{p+1/i - 1}{p}}
+ L\left(\frac{p/i}{p+1/i - 1},\frac{p}{p+ 1/i -1}\right)^{\frac{ p+1/i -1}{p}}\right. \nonumber\\
&&\left.+ iJ\left(-\frac{p/i}{p-1/i - 1}\frac{2p}{ p- 1/i - 1}\right)^{\frac{p - 1/i - 1}{p}} + L\left(0,\frac{p}{p - 1}\right)^{\frac{p - 1}{p}}\right].
\end{eqnarray}
Substituting \eqref{eq:test_function_varphi_i} into (\ref{eq:definition_IJQ}), and combining (\ref{eq:grad_gamma_k}) and (\ref{eq:grad_test_function_varphi_i}), noting $\eta_k \leq 1$, we obtain
\begin{eqnarray*}
&&iJ\left(\frac{p/i}{p+1/i - 1},\frac{2p}{p+1/i - 1}\right)^{\frac{p+1/i - 1}{p}}
 \nonumber\\
 &=& i\left(\int_0^\infty\int_M h^{\frac{p/i}{p+1/i - 1}}i^{ - \frac{2p}{p +1/i- 1}}
\sum\limits_{k = i + 1}^{2i} \left|\eta_k\nabla\gamma_k \right|^{\frac{2p}{p+1/i - 1}} d\nu dt\right)^{\frac{p + 1/i - 1}{p}} \nonumber\\
&=& i^{ - 1 }\left(\sum\limits_{k = i + 1}^{2i} \int_0^{2^{2k + 1}} \eta_k^{\frac{2p}{p+1/i - 1}}
 \int_{B(x_0, 2^{k + 1})\backslash B(x_0,2^k)} h^{\frac{p/i}{p+1/i - 1}}\left| \nabla \gamma _k \right|^{\frac{2p}{p +1/i- 1}} d\nu dt\right )^{\frac{p + 1/i - 1}{p}} \nonumber\\
&\leq& Ci^{ - 1 }
\left(\sum\limits_{k = i + 1}^{2i} 2^{2k + 1}(2^{k+1})^{\frac{\delta_2p/i}{p+1/i-1}}
2^{ - \frac{2pk}{p +1/i- 1}}
\nu(B(x_0,2^{k + 1})) \right)^{\frac{p + 1/i - 1}{p}},
\end{eqnarray*}
where we have used the condition $(H)$.

Applying volume condition $\nu (B(x_0, r))\leq Cr^{P}\ln ^{Q}r$ with $P = \frac{2}{p - 1}$ and
 $Q = \frac{1}{p - 1}$, we obtain
\begin{eqnarray}
&&iJ\left(\frac{p/i}{p+1/i - 1},\frac{2p}{p+1/i - 1}\right)^{\frac{p+1/i - 1}{p}}
 \nonumber\\
 &\leq& Ci^{ - 1}\left(\sum\limits_{k = i + 1}^{2i} 2^{2k - \frac{2pk}{p + 1/i- 1}}
2^{\frac{k\delta_2p/i}{p+1/i-1}}
2^{kP}k^{Q}\right)^{\frac{p + 1/i - 1}{p}} \nonumber\\
&\leq& Ci^{ - 1 + Q\frac{p + 1/i - 1}{p}}\left(\sum\limits_{k = i + 1}^{2i}
2^{k[2 - \frac{2p}{p +1/i- 1}+\frac{\delta_2p/i}{p+1/i-1} + P]} \right)^{\frac{p + 1/i - 1}{p}} \nonumber\\
&\leq&Ci^{ - 1 + Q\frac{p + 1/i - 1}{p}+\frac{p+1/i-1}{p}}.
\end{eqnarray}
Here we have used that
\begin{eqnarray}\label{eq:limits_sum}
\left(\sum\limits_{k = i + 1}^{2i}
2^{k[2 - \frac{2p}{p +1/i- 1}+\frac{\delta_2p/i}{p+1/i-1} + P]} \right)^{\frac{p + 1/i - 1}{p}}
&=&
2^{2\delta_2p}\left(\sum\limits_{k = i + 1}^{2i}2^{\frac{2kp}{i(p-1)(p +1/i- 1)}}\right)^{\frac{p + 1/i - 1}{p}}
\nonumber\\
&\leq& Ci^{\frac{p + 1/i - 1}{p}}.
\end{eqnarray}
Noting that
$$\limsup\limits_{i \to \infty }
i^{ - 1+ Q\frac{p + 1/i - 1}{p} + \frac{p + 1/i - 1}{p}}=\limsup\limits_{i \to \infty }i^{\frac{1}{i(p-1)}} = 1,$$
we obtain
\begin{eqnarray}\label{eq:estimate_J1}
iJ\left(\frac{p/i}{p+1/i-1},\frac{2p}{p+1/i - 1}\right)^{\frac{p+1/i - 1}{p}} \leq C.
\end{eqnarray}
Substituting (\ref{eq:test_function_varphi_i}) into (\ref{eq:definition_IJQ}), applying (\ref{eq:partial_t_eta_k}) and (\ref{eq:partial_t_test_function_varphi_i}), noting $\gamma_k \leq1$, we obtain
\begin{eqnarray*}
&&L\left(\frac{p/i}{p+1/i-1},\frac{p}{ p+1/i -1}\right)^{\frac{p+ 1/i -1}{p}} \nonumber \\
&=& \left(i^{ - \frac{p}{p+1/i -1}}\sum\limits_{k = i + 1}^{2i}\int_{2^{2k}}^{2^{2k + 1}}
\left|\partial_t \eta_k \right|^{\frac{p}{p+1/i -1}} dt
\int_{B(x_0, 2^{k + 1})}h^{\frac{p/i}{p+1/i-1}}
\gamma_k^{\frac{p}{p+1/i -1}}d\nu \right)^{\frac{p+1/i - 1}{p}} \nonumber\\
&\leq& C\left(i^{ - \frac{p}{p+1/i -1}}\sum\limits_{k = i + 1}^{2i} 2^{ - \frac{2k p}{p+1/i -1}}2^{2k}
(2^{k+1})^{\frac{\delta_2p/i}{p+1/i-1}}
\nu (B(x_0, 2^{k + 1})) \right)^{\frac{p+1/i - 1}{p}} \nonumber\\
& \leq& C\left(i^{- \frac{p}{p+1/i -1}}\sum\limits_{k = i + 1}^{2i}
2^{ - \frac{2kp}{p+1/i -1}}2^{2k}
2^{\frac{k\delta_2p/i}{p+1/i-1}}
2^{kP}k^{Q}\right)^{\frac{p+1/i - 1}{p}} \nonumber \\
&\leq & C^{\prime}\left(i^{ - \frac{p}{p+1/i - 1} + Q}\sum\limits_{k = i + 1}^{2i} 2^{k[2- \frac{2p}{p+1/i - 1} + P]}
\right)^{\frac{p+1/i - 1}{p}},
\end{eqnarray*}
where the term $2^{\frac{k\delta_2p/i}{p+1/i-1}}$ has been absorbed into constant $C^{\prime}$.

Using (\ref{eq:limits_sum}) again, we have
$$
\left(L\left(\frac{p/i}{p+1/i-1},\frac{p}{ p+1/i -1}\right)\right)^{\frac{p+1/i -1}{p}} \leq
 C\left(i^{- \frac{p}{p+1/i - 1} + Q + 1}\right)^{\frac{p+1/i - 1}{p}}.$$
Since
$$\mathop {\limsup }\limits_{i \to \infty }
\left(i^{ - \frac{p}{p+1/i - 1} +Q + 1}
\right)^{\frac{p+1/i - 1}{p}} = 1,$$
we obtain
\begin{eqnarray}\label{eq:estimate_Q1}
\left(L\left(\frac{p/i}{p+1/i-1},\frac{p}{p+ 1/i -1}\right)\right)^{\frac{p+1/i -1}{p}} \leq C.
\end{eqnarray}
Similarly,
\begin{eqnarray}\label{eq:estimate_J2}
&& iJ\left(-\frac{p/i}{p-1/i-1},\frac{2p}{ p- 1/i - 1}\right)^{\frac{p - 1/i - 1}{p}} \nonumber \\
&&= i^{ - 1}\left(\sum\limits_{k = i + 1}^{2i} \int_0^{2^{2k + 1}}
\eta _k ^{\frac{2p}{p - 1/i - 1}}dt \int_{B(x_0,2^{k + 1})\backslash B(x_0,2^k)}
h^{-\frac{p/i}{p-1/i-1}}\left| \nabla \gamma_k \right|^{\frac{2p}{p - 1/i - 1}} d\nu\right)^{\frac{p- 1/i - 1}{p}} \nonumber\\
&&\leq Ci^{ - 1}\left(\sum\limits_{k = i + 1}^{2i}2^{2k + 1}2^{- k\frac{2p}{p - 1/i - 1}}
(2^{k})^{\frac{\delta_1p/i}{p-1/i-1}}\nu (B(x_0,2^{k + 1}))\right)^{\frac{p- 1/i - 1}{p}} \nonumber\\
&&\leq C^{\prime}i^{ - 1 + Q\frac{- 1/i - 1 + p}{p}}
\left(\sum\limits_{k = i + 1}^{2i}2^{k(2 - \frac{2p}{p - 1/i - 1} + P)}\right )^{\frac{p- 1/i - 1}{p}} \nonumber \\
&&\leq Ci^{- 1 + Q\frac{p- 1/i - 1}{p}+ \frac{p- 1/i - 1}{p}} \nonumber \\
&&\leq Ci^{-\frac{1}{i(p-1)}}<\infty.
\end{eqnarray}
and
\begin{eqnarray}\label{eq:estimate_Q2}
\left(L\left(0,\frac{p}{p - 1}\right)\right)^{\frac{p - 1}{p}}
&=& \left(i^{- \frac{p}{p - 1}}\sum\limits_{k = i + 1}^{2i} \int_{2^{2k}}^{2^{2k + 1}}\left|\partial_t \eta_k\right|^{\frac{p}{p - 1}}dt
 \int_{B(x_0, 2^{k + 1})}\gamma_k^{\frac{p}{p - 1}} d\nu\right)^{\frac{p + 1/i - 1}{p}} \nonumber\\
&\leq& \left(i^{ - \frac{p}{p - 1}}\sum\limits_{k = i + 1}^{2i} 2^{ - 2k\frac{p}{p - 1}}2^{2k}\nu (B(x_0,2^{k + 1})) \right)^{\frac{p + 1/i - 1}{p}} \nonumber\\
&\leq& C\left(i^{ - \frac{p}{p - 1} + Q}\sum\limits_{k = i + 1}^{2i}2^{k( - 2\frac{p}{p - 1} + 2 + P)}
\right)^{\frac{p + 1/i - 1}{p}} \nonumber\\
&=& C\left(i^{ - \frac{p}{p - 1} + Q + 1}\right)^{\frac{p + 1/i - 1}{p}} \nonumber\\
&=& C<\infty .\nonumber\\
\end{eqnarray}
Combining (\ref{eq:estimate_J1}), (\ref{eq:estimate_Q1}, (\ref{eq:estimate_J2}), (\ref{eq:estimate_Q2}) with (\ref{eq:estimate_I_by_JQ_change_alpha_to_i}), we have
\begin{eqnarray}
\int_0^{2^{2i}} \int_{B(x_0, 2^{i})}h^{p-1}v^p d\mu dt\leq C < \infty.
\end{eqnarray}
It follows by letting $i \rightarrow \infty$ that
\begin{eqnarray*}\label{eq:Int_u_le_C}
\int_{0}^{\infty }\int_{M}h^{p-1}v^{p}d\mu dt \leq C < \infty.
\end{eqnarray*}
Hence, the claim (\ref{integral-bound}) is true.

Substituting $\varphi = \varphi_i$ and $R = 2^i$ into (\ref{eq:estimate_I_by_K1234}),
combining with (\ref{eq:estimate_J1}), (\ref{eq:estimate_Q1}, (\ref{eq:estimate_J2}), (\ref{eq:estimate_Q2}) and (\ref{integral-bound}),
repeating the same procedures in (\ref{eq:estimate_I_by_K1234}), we obtain
\begin{eqnarray}\label{eq:the_last_inequality}
\int_{0}^{2^{2i} }\int_{B_{2^i}}h^{p-1}v^{p}d\mu dt &\leq& C\left\{
\left(\iint_{D_{2^i}^c} h^{p-1}v^p d\mu dt \right)^{\frac{1-\frac{1}{i}}{p}}
+\left(\iint_{D_{2^i}^c} h^{p-1}v^p d\mu dt \right)^{\frac{1+\frac{1}{i}}{p}}\right.\nonumber\\
&&\left.+\left(\iint_{D_{2^i}^c} h^{p-1}v^p d\mu dt \right)^{\frac{1}{p}}\right\}.
\end{eqnarray}
Letting $i \rightarrow \infty$, from (\ref{integral-bound}), we have
$$\int_{0}^{\infty }\int_{M} h^{p-1}v^p d\mu dt = 0,$$
which implies
$$v\equiv 0. $$
Noting that $u=hv$, hence $u\equiv0$. However, the above leads to the contradiction with the positiveness of $u$. Hence, there
exists no global positive solution to problem (\ref{eq-pra}).
\end{proof}

\section{Global existence of positive solution}\label{GE}
In this section, we show the sharpness of $P, Q$ in Theorem \ref{thm:3}.
It suffices to show that $Q$ in (\ref{volume-1}) can not be relaxed.

\begin{proof}[Proof of Theorem \ref{thm:3}]\RM
Define the operator
\begin{eqnarray}\label{eq: Tu}
Tv(x,t)=\int_M \tilde{P}_t(x,y)v_0(y)d\mu(y) + \int_0^t \int_M \tilde{P}_{t-s}(x,y)h^{p-1}v^p(y,s) d\mu(y)ds.
\end{eqnarray}
acting on the following space
\begin{eqnarray}\label{assumption: u}
S_M=\left\{v\in L^{\infty}(M\times[0,\infty))|\ 0 \leq v(x,t) \leq \lambda \tilde{P}_{t+\delta}(x,x_0)\right\}.
\end{eqnarray}
where  $\lambda >0$ is a constant to be chosen later, and $\delta>1$ is a large fixed constant.
It follows that $S_M$ is a closed set of $L^{\infty}(M\times[0,\infty), d\mu)$.

Let $v_0$ satisfy
\begin{equation}\label{assumption: u_0}
0\leq v_0(x)\leq\frac{\lambda}{2}\tilde{P}_{\delta}(x,x_0).
\end{equation}
Now let us show $TS_M\subset S_{M}$.

From (\ref{assumption: u_0}), and applying (\ref{hk-semigroup}), we have
\begin{eqnarray}\label{eq: Tu_part_1}
\int_{M}\tilde{P}_t(x,y)v_0(y)d\mu(y)&\leq&\frac{\lambda}{2}\int_{M}\tilde{P}_t(x,y)
\tilde{P}_{\delta}(y,x_0)d\mu(y)\nonumber\\
&=&\frac{\lambda}{2}\tilde{P}_{t+\delta}(x,x_0).
\end{eqnarray}
From $(DUE)$ and (\ref{assumption: u}), we have
\begin{eqnarray}\label{eq: en_taro_adun}
&&\int_0^t \int_M \tilde{P}_{t-s}(x,y)h^{p-1}v^p(y,s) d\mu(y)ds \nonumber\\
&\leq& C_1\lambda^p\int_0^t \int_M \tilde{P}_{t-s}(x,y)\tilde{P}_{s+\delta}^p(y,x_0) d\mu(y)ds \nonumber\\
&\leq& C_2\lambda^p\int_0^t \frac{1}{\mu(B(x_0,\sqrt{s+\delta}))^{p-1}} ds\int_M \tilde{P}_{t-s}(x,y)\tilde{P}_{s+\delta}(y,x_0) d\mu(y) \nonumber\\
&\leq& C_3\lambda^p\tilde{P}_{t+\delta}(x,x_0) \int_0^t \frac{1}{\mu(B(x_0,\sqrt{s+\delta}))^{p-1}} ds,
\end{eqnarray}
where we have used that $h\asymp1$.

Recalling that for large enough $r$,
$$\mu_0(B(x_0,r)) \geq  c_2r^{P}\ln^{Q+\varepsilon} r,$$
and since $h\asymp1$, and $d\mu=h^2d\mu_0$, we have
\begin{eqnarray*}
\mu(B(x_0,r)) \geq  c_3r^{P}\ln^{Q+\varepsilon} r.
\end{eqnarray*}
When $\delta$ is large enough, we obtain
\begin{eqnarray}\label{eq: int_VB_leq_C3}
&&\int_0^t \frac{1}{\mu(B(x_0,\sqrt{s+\delta}))^{p-1}} ds \nonumber\\
&\leq&\int_0^t\frac{1}{\left[C_1(s+\delta)^{\frac{P}{2}}(\ln\sqrt{s+\delta} )^{Q+\varepsilon}\right]^{p-1}}ds\nonumber\\
&\leq& C_42^{1+\varepsilon(p-1)}\int_0^t \frac{1}{(s+\delta)[\ln(s+\delta)]^{1+\varepsilon(p-1)}} ds \nonumber\\
&\leq& C_42^{1+\varepsilon(p-1)}\int_0^\infty \frac{1}{(s+\delta)[\ln(s+\delta)]^{1+\varepsilon(p-1)}} ds \nonumber\\
&\leq& C_5 < \infty,
\end{eqnarray}
where we have used that $P=\frac{2}{p-1}$, $Q=\frac{1}{p-1}$.

Combining (\ref{eq: en_taro_adun}) with (\ref{eq: int_VB_leq_C3}), we obtain, for small enough $\lambda$,
\begin{eqnarray}\label{eq: Tu_part_2}
\int_0^t \int_M \tilde{P}_{t-s}(x,y)h^{p-1}v^p(y,s) d\mu(y)ds &\leq& C_5C_3\lambda^p
\tilde{P}_{t+\delta}(x,x_0) \nonumber\\
&\leq& \frac{\lambda}{2}\tilde{P}_{t+\delta}(x,x_0).
\end{eqnarray}
Combining (\ref{eq: Tu}),(\ref{eq: Tu_part_1}) with (\ref{eq: Tu_part_2}), we obtain
$$0 \leq Tv \leq \lambda \tilde{P}_{t+\delta}(x,x_0).$$
Hence
$$TS_{M} \subset S_M.$$
Now we show that $T$ is a contraction map. For $v_1$, $v_2\in S_M$, we have
\begin{eqnarray}\label{eq: en_taro_artanis}
\left|Tv_1(x,t)-Tv_2(x,t)\right| \leq
\int_0^t \int_M\tilde{P}_{t-s}(x,y)h^{p-1}\left|v_1^p(y,s)-v_2^p(y,s)\right| d\mu(y)ds.
\end{eqnarray}
Noting that
$$\left|v_1^p(y,s)-v_2^p(y,s)\right| \leq p \max\{v_1^{p-1}(y,s),v_2^{p-1}(y,s)\}\left|v_1(y,s)-v_2(y,s)\right|,$$
and combining with $(DUE)$, (\ref{assumption: u}) and (\ref{eq: int_VB_leq_C3}),
  and using that $h\asymp1$, we obtain from (\ref{eq: en_taro_artanis})
that
\begin{eqnarray*}
&& \left|Tv_1(x,t)-Tv_2(x,t)\right| \nonumber\\
&\leq& C_6p\lambda^{p-1}\left\| v_1-v_2 \right\|_{L^\infty}\int_0^t \int_M \tilde{P}_{t-s}(x,y)\tilde{P}_{s+\delta}^{p-1}(y,x_0) d\mu(y)ds \nonumber\\
&\leq& C_7p\lambda^{p-1}\left\| v_1-v_2 \right\|_{L^\infty}
\int_0^t \frac{1}{\mu(B(x_0,\sqrt{s+\delta}))^{p-1}}ds
\int_M \tilde{P}_{t-s}(x,y) d\mu(y)  \nonumber\\
&\leq& C_8p\lambda^{p-1}C_1^{p-1}\left\| v_1-v_2 \right\|_{L^\infty}
\int_0^t \frac{1}{\mu(B(x_0,\sqrt{s+\delta}))^{p-1}} ds \nonumber\\
&\leq& C_9p\lambda^{p-1}\left\| v_1-v_2 \right\|_{L^\infty},
\end{eqnarray*}
where we have used that (\ref{hk-markov}) and (\ref{eq: int_VB_leq_C3}).

Choosing $\lambda$ small enough so that $C_9p\lambda^{p-1}<1$, we obtain that $T$ is a contraction map.
Applying fixed point theorem, we know
there exists a fixed point $v\in S_M$ satisfying
\begin{equation}\label{fix}
v(x,t)=\int_M \tilde{P}_t(x,y)v_0(y)d\mu(y) + \int_0^t \int_M
\tilde{P}_{t-s}(x,y)h^{p-1}(y)v^p(y,s) d\mu(y)ds.
\end{equation}
Since  $v_0\gneqq0$, then $v$ is positive on $M$.
 Since $ v_0, v\in L^2(M, d\mu)$,
 by \cite[Theorem 7.6 and 7.7]{Grigoryan-book},
we know the integrals in (\ref{fix}) are both smooth on $M\times(0,\infty)$, hence
we obtain that $v$ is a global positive solution of problem (\ref{eqT}), Furthermore, $u=hv$ is a global positive solution of problem (\ref{eq-pra}).
\end{proof}



\begin{acknowledgment}
\RM
The authors would like to express their deep gratitude to Prof.
Qi S. Zhang from University of California Riverside who initiated the study of the above problems, and  bringing our attentions to his paper
\cite{Zhang-duk99}. The authors would also like to thank Prof. Verbitsky from University of Missouri for helpful communications in Section \ref{example}.
\end{acknowledgment}


\begin{thebibliography}{99}
\bibitem{AW}D. G. Aronson, H. F. Weinberger, Multi-dimensional nonlinear diffusion arising in population genetics,
Adv. Math., \textbf{30}(1978), 33-76.

\bibitem{Ban-Lev}C. Bandle, H. A. Levine, On the existence and non-existence of global solutions of reaction-diffusion equations
in sectorial domains, Tran. Amer. Math. Soc., \textbf{655}(1989), 595-624.

\bibitem{Ban-Tesei11} C. Bandle, M. A. Pozio, A. Tesei, The fujita exponent for the Cauchy problem in the hyperbolic space,
J. Diff. Eq. \textbf{251}(2011), 2143-2163.

%

\bibitem{Miti08} G. Caristi, L. D'Ambrosio, and E. Mitidieri, Liouville Theorems for
some nonlinear inequalities, Proc. Steklov Inst. Math. \textbf{260}(2008), 90-111.

\bibitem{Miti09} G. Caristi, E. Mitidieri, S. I. Pohozaev, Some Liouville theorems for quasilinear elliptic
inequalities, Doklady Math. \textbf{79}(2009), 118-124.

\bibitem{Chavel84}I. Chavel, Eigenvalues in Riemannian Geometry, Academic Press, New York, 1984.

\bibitem{ChengYau} S. Y. Cheng, S.-T Yau, Differential equations on
Riemannian manifolds and their geometric applications, Comm. Pure
Appl. Math. \textbf{28}(1975), 333-354.

\bibitem{Davies89}E. B. Davies, Heat Kernels and Spetral Theory, Cambridage University Press, Cambridage,
1989.

\bibitem{Fujita}H. Fujita, On the blowup of solutions of the Cauchy problem for $u_t=\Delta u+u^{1+\alpha}$,
J. Fac. Sci. Univ. Tokyo Sect. IA Math.,\textbf{13}(1966),109-124.



\bibitem{Gal-Lev98}V. A. Galaktionov, H. A. Levine, A general approach to critical Fujita exponents in nonlinear parabolic problems,
Nonlinear Anal. \textbf{34}(1998), 1005-1027.


\bibitem{Grigoryan85} A. Grigor'yan, On the existence of positive
fundamental solution of the Laplace equation on Riemannian manifolds (in
Russian), Matem. Sbornik \textbf{128}(1985) no.3, 354-363. Engl.
transl.: Math. USSR Sb. \textbf{56} no.2(1987) 349-358.


\bibitem{Grigoryan06} A. Grigor'yan, Heat kernels on weighted manifolds and
applications, Contemp. Math. \textbf{398}(2006) 93-191.

\bibitem{Grigoryan-book} A. Grigor'yan, Heat Kernel and Analysis on
Manifolds, AMS/IP, 2009.

\bibitem{Grigoryan13} A. Grigor'yan, Y. Sun, On nonnegative of the inequality $\Delta u+u^{\sigma}\leq0$
on Riemannian manifolds, Comm. Pure Appl. Math. \textbf{67}(2014), no. 8, 1336-1352.

\bibitem{Gri-Sun-Igor18}A. Grigor'yan, Y. Sun, I. Verbitsky, Superlinear elliptic inequalities on manifolds,
preprint, arXiv: 1810.03055.

\bibitem{H}K. Hayakawa, On non-existence of global solutions of some semilinear parabolic differential equations,
Proc. Japan Acad. \textbf{49}(1973), 503-505.

\bibitem{HMV99}K. Hansson, V. G. Maz'ya, I. E. Verbitsky, Criteria of solvability for multidimensional Riccati equations, Ark. Mat., \textbf{37}(1999), 87-120.

\bibitem{KST}K. Kobayashi, T. Sirao, and H. Tanaka, On the growing up problem for semilinear heat equations, J. Math. Soc.
Japan., \textbf{29}(1977), 373-381.

\bibitem{IK08}K. Ishige, Y. Kabeya, Large time behaviors of hot spots for the heat equation with a
potential, J. Diff. Equ. \textbf{244}(2008), 2934-2962.

\bibitem{Ishi08}K. Ishige, On the Fujita exponent for a semilinear heat equation with a potential term,
J. Math. Anal. Appl. \textbf{344}(2008), 231-237.



\bibitem{Lev90}H. A. Levine, The role of critical exponents in blowup theorems, SIAM Rev. \textbf{32}(1990),
262-288.

\bibitem{Li-Yau86}P. Li, S.\text{-}T. Yau, On the parabolic kernel of Schr\"{o}dinger operator,
Acta Math., \textbf{156}(1986), 153-201.



\bibitem{MMP-P} P. Mastrolia, D. D. Monticelli, F. Punzo, Nonexistence of
solutions to parabolic differential inequalities with a potential on
Riemannian manifolds, Math. Ann. \textbf{367}(2017) no. 3-4,
929-963.

\bibitem{Miti98} E. Mitidieri, S. I. Pohozaev, Absence of global positive solutions
of quasilinear elliptic inequalities. (Russian) Dokl. Akad. Nauk. \textbf{359}(1998),
no. 4, 456-460.


\bibitem{Miti01} E. Mitidieri, S. I. Pohozaev, A priori estimates and
the absence of solutions of nonlinear partial differential equations and
inequalities, Tr. Math. Inst. Steklova, (in Russian) \textbf{234 }(2001),
1-384. Engl. transl.: Proc. Steklov Inst. Math., \textbf{234 }(2001) no.3,
1-362.

\bibitem{Pinsky97}R. G. Pinsky, Existence and nonexistence of global solutions for
$u_t=\Delta u+a(x)u^p$ in $\mathbb{R}^d$, J. Diff. Equ.\textbf{133}(1997), 152-177.

\bibitem{Pinsky09}R. G. Pinsky, The Fujita exponent for semilinear heat equations with quadratically
decaying potential or in an exterior domain,  J. Differential Equations, \textbf{246} (2009), no. 6, 2561-2576.

\bibitem{VSC92}N. Th. Varopoulos, L. Saloff-Coste, and T. Coulhon, Analysis and Geometry on Groups,
 Cambridge University Press, 1992.

\bibitem{Sun-jmaa}Y. Sun, Uniqueness result for non-negative solutions of semi\text{-}linear inequalities on Riemannian manifolds,
J. Math. Anal. Appl. \textbf{419}(2014), 643-661.

\bibitem{Wang-Xiao} Y. Wang, J. Xiao, A constructive approach to positive
solutions of $\Delta _{p}u+f(u,\nabla u)\leq 0$ on Riemannian manifolds,
 Ann. Inst. H. Poincar\'{e} Anal. Non Lin\'{e}aire, \textbf{33}(2016), no. 6, 1497-1507.



\bibitem{Weissler80}F. B. Weissler, Local existence and nonexistence for semilinear parabolic equation in $L^p$,
Indiana Univ. Math. J. \textbf{29}(1980), 79-102.

\bibitem{Weissler81}F. B. Weissler, Existence and nonexistence of global solutions for a semilinear heat equation,
Israel Math. \textbf{38}(1981), 29-40.



\bibitem{Zhang-98}Qi S. Zhang, A new critical phenomenon for semilinear parabolic problems,
J. Math. Appl. Anal. \textbf{219}(1998), 125-139.

\bibitem{Zhang-duk99}Qi S. Zhang, Blow up results for nonlinear parabolic problems on manifolds, Duke Math., \textbf{97}(1999), 515-539.

\bibitem{Zhang01}Qi S. Zhang, The quantizing effect of potentials on the critical number of reaction-diffusion equations, J. Diff. Equ. \textbf{170}(2001), 188-214.

\bibitem{Zhang-ext} Qi S. Zhang, A general blow-up result on nonlinear boundary-value problems on exterior domains, Proc. R. Soc. Edinb., \textbf{131}(2001), 451-475.

\end{thebibliography}
\end{document}